\documentclass[11pt]{amsart}
\usepackage{amsmath,amssymb,amsthm,enumerate,graphicx,mathtools}

\usepackage{xypic} 
\usepackage{mathrsfs} 
\usepackage{xspace}

\usepackage{hyperref}

\usepackage[usenames, dvipsnames]{color}
\newtheorem{thm}{Theorem}[section] 

\newtheorem{obs}[thm]{Observation}
\newtheorem{prop}[thm]{Proposition}
\newtheorem{lem}[thm]{Lemma}
\newtheorem{cor}[thm]{Corollary}

\theoremstyle{definition}
\newtheorem{defn}[thm]{Definition}

\newtheorem{rmk}[thm]{Remark}

\newtheorem*{claim*}{Claim}
\newtheorem{ack}[thm]{Acknowledgments}



\newcommand{\mc}[1]{\mathcal{#1}}
\newcommand{\mb}[1]{\mathbb{#1}}
\newcommand{\mf}[1]{\mathfrak{#1}}

\newcommand{\Zb}{\mathbb{Z}}

\newcommand{\Rb}{\mathbb{R}}

\newcommand{\Tb}{\mathbb{T}}

\newcommand{\Norm}{\mathcal{N}}

\newcommand{\tdlc}{t.d.l.c.\@\xspace}

\newcommand{\lcsc}{l.c.s.c.\@\xspace}
\newcommand{\defbold}{\textbf}


\newcommand{\normal}{\trianglelefteq}

\newcommand{\inv}{^{-1}}
\newcommand{\triv}{\{1\}}

\newcommand{\CC}{\mathrm{C}}
\newcommand{\N}{\mathrm{N}}
\newcommand{\Z}{\mathrm{Z}}

\newcommand{\cgrp}[1]{\overline{\langle #1 \rangle}}

\newcommand{\grp}[1]{\langle #1 \rangle}
\newcommand{\ol}[1]{\overline{#1}}

\newcommand{\Sym}{\mathop{\rm Sym}\nolimits}
\newcommand{\Aut}{\mathop{\rm Aut}\nolimits}

\newcommand{\Rad}[1]{\mathop{\rm Rad}_{#1}\nolimits}

\begin{document}

\title[The essentially chief series]{The essentially chief series of a compactly generated locally compact group}
\author{Colin D. Reid}
\address{  University of Newcastle,
   School of Mathematical and Physical Sciences,
   University Drive,
   Callaghan NSW 2308, Australia}
\email{colin@reidit.net}
\thanks{The first named author is an ARC DECRA fellow.  Research supported in part by ARC Discovery Project DP120100996.}

\author{Phillip R. Wesolek}
\address{Binghamton University, Department of Mathematical Sciences, PO Box 6000, 	Binghamton, New York 13902-6000, USA}

\address{Previous address: Universit\'{e} catholique de Louvain,
   Institut de Recherche en Math\'{e}matiques et Physique (IRMP),
   Chemin du Cyclotron 2, bte L7.01.02,
   1348 Louvain-la-Neuve, Belgium}
\thanks{The second named author was supported by ERC grant \#278469.}
\email{pwesolek@binghamton.edu}

\begin{abstract}
We first obtain finiteness properties for the collection of closed normal subgroups of a compactly generated locally compact group. Via these properties, every compactly generated locally compact group admits an essentially chief series -- i.e.\ a finite normal series in which each factor is either compact, discrete, or a topological chief factor. A Jordan--H\"{o}lder theorem additionally holds for the `large' factors in an essentially chief series.
\end{abstract}
\maketitle
\tableofcontents

\addtocontents{toc}{\protect\setcounter{tocdepth}{1}}
\section{Introduction}

Within the theory of locally compact groups, an important place is occupied by the compactly generated groups.  From a topological group theory perspective, every locally compact group is the directed union of its compactly generated subgroups, so problems of a `local' nature can be reduced to the compactly generated case. From a geometric perspective, such groups admit a well-defined geometric structure and are the natural generalization of finitely generated groups. Finally and most concretely, many examples of locally compact groups of independent interest are compactly generated. Any locally compact group that acts continuously, properly, and cocompactly on a proper metric space is compactly generated; for example $\Aut(\Gamma)$ is such a group for \textit{any} Cayley graph $\Gamma$ of a finitely generated group.

There is an emerging structure theory of compactly generated locally compact groups which reveals that they have special properties, often in a form that has no or trivial counterpart in the theory of finitely generated discrete groups. This theory could be said to begin with the paper \textit{Decomposing locally compact groups into simple pieces} of P.-E. Caprace and N. Monod, \cite{CM11}, in which general results on the normal subgroup structure of compactly generated locally compact groups are derived. 

The key insight of Caprace and Monod is to study compactly generated locally compact groups as \textit{large-scale topological objects}. That is to say, they observe that non-trivial interactions between local structure and large-scale structure place significant restrictions on compactly generated locally compact groups. Of course, these restrictions will always be up to compact groups and discrete groups; e.g.\ these results are insensitive to, say, taking a direct product with a discrete group. We stress that this theory nonetheless can yield non-trivial results for discrete groups; consider \cite{SW_13} or \cite{W17}.

The work at hand is a further contribution to the (large-scale topological) structure theory of compactly generated locally compact groups. We first establish finiteness conditions for the lattice of closed normal subgroups. These conditions are then used to prove the existence of a finite chief series, up to compact groups and discrete groups, in any compactly generated locally compact group. 

\begin{rmk}Compactly generated locally compact groups are second countable modulo a compact normal subgroup; see \cite[Theorem 8.7]{HR79}. We thus restrict to the second countable case whenever convenient.
\end{rmk}

\subsection{Statement of results} 
A \defbold{normal factor} of a topological group $G$ is a quotient $K/L$ such that $K$ and $L$ are distinct closed normal subgroups of $G$ with $L < K$. We say that $K/L$ is a (topological) \defbold{chief factor} of $G$ if there are no closed normal subgroups of $G$ strictly between $L$ and $K$.

\begin{defn}
An \defbold{essentially chief series} for a locally compact group $G$ is a finite series
\[
\triv = G_0 \leq G_1 \leq \dots \leq G_n = G
\]
of closed normal subgroups such that each normal factor $G_{i+1}/G_i$ is either compact, discrete, or a topological chief factor of $G$.
\end{defn}

Every compactly generated locally compact group $G$ admits an essentially chief series; indeed, any finite normal series can be refined to an essentially chief series.

\begin{thm}[See Theorem~\ref{thm:chief_series}]
Suppose that $G$ is a compactly generated locally compact group. If $(G_1,G_2,\dots,G_m)$ is an increasing sequence of closed normal subgroups of $G$, then there exists an essentially chief series
\[
\triv = K_0 \leq K_1 \leq \dots \leq K_n = G
\]
for $G$ such that $\{G_1,\dots,G_m\}$ is a subset of $\{K_0,\dots,K_n\}$.
\end{thm}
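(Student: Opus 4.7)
The plan is to reduce the problem to refining a single interval, and then to produce such a refinement by a recursive bisection of the closed normal subgroup lattice of $G$, using the finiteness conditions on that lattice established earlier in the paper. Concretely, if for every pair $A \leq B$ of closed normal subgroups of $G$ we can exhibit a finite refinement $A = H_0 \leq H_1 \leq \dots \leq H_k = B$ with each $H_{j+1}/H_j$ compact, discrete, or a chief factor of $G$, then applying this to each $[G_i,G_{i+1}]$ and concatenating produces the desired essentially chief series, and by construction each $G_i$ appears among its terms. I would formalise this ``piecewise refinement'' step first, so that the rest of the argument focuses on a single interval $[A,B]$.

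For a single interval, the natural procedure is: if $B/A$ has no closed $G$-invariant subgroup strictly between $A$ and $B$, then it is already a chief factor and we stop; otherwise, if $B/A$ is compact or discrete we also stop; otherwise, pick any intermediate closed normal subgroup $A < N < B$, split the interval into $[A,N]$ and $[N,B]$, and recurse on each piece. Such an algorithm visibly produces only compact, discrete, or chief factors in its leaves, so the entire content of the proof is to verify that this recursion halts after finitely many splits.

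The main obstacle is exactly this termination: a priori one could bisect indefinitely, producing an infinite binary tree. This is where the finiteness properties of the closed normal subgroup lattice from the earlier part of the paper enter; they are designed precisely to rule out infinite chains and infinite antichains of non-compact, non-discrete factors in a compactly generated locally compact group. The argument I anticipate is that to any interval $[A,B]$ one can attach a well-founded ordinal invariant, or equivalently show that in the quotient $G/A$ any strictly ascending chain of closed normal subgroups whose successive factors are neither compact nor discrete has bounded length. Compact generation is indispensable here: a finite generating set must eventually land in some term of any ascending chain, which is the mechanism that forces chain conditions in this context.

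Given that input, the recursion is well-founded: at each bisection of a non-terminal interval, both halves have strictly smaller invariant than the parent, so the algorithm must halt. The hard part will be verifying that the finiteness results from earlier in the paper really do supply the right invariant and apply inside arbitrary intervals $[A,B]$, rather than only at the level of $G$ itself; I expect this to follow from passing to the quotient $G/A$, noting that it is still compactly generated, and applying the lattice-finiteness results there. Once that is in place, gluing the refinements of each $[G_i,G_{i+1}]$ yields the required essentially chief series containing all the $G_i$.
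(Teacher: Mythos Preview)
Your reduction to refining a single interval $[A,B]$ is fine, but the recursive bisection with an \emph{arbitrary} intermediate subgroup does not terminate in general, and Theorem~\ref{thm:essential_finiteness} does not supply an invariant that strictly decreases on both halves of an arbitrary split. Take $G=\Zb_p\times\Zb$ and $N_k:=p^k\Zb_p\times 2^k\Zb$. Each $N_k$ is neither compact nor discrete, $N_{k+1}<N_k$, and $[\triv,N_k]$ always contains a proper intermediate $G$-normal subgroup, so none of these intervals is terminal in your sense. Choosing $N_{k+1}$ as the splitting point inside $[\triv,N_k]$ sends your recursion down an infinite left branch. The chain condition you appeal to (bounded length for chains whose successive factors are neither compact nor discrete) is true but does not help: along this branch the successive factors $N_k/N_{k+1}$ are finite, so that condition never bites, yet the intervals $[\triv,N_k]$ themselves never become compact, discrete, or chief.

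The paper does not bisect. It first splits each interval $[G_j,G_{j+1}]$ at the preimage of $(G_{j+1}/G_j)^\circ$, and then refines the two pieces by making \emph{specific} choices governed by integer invariants: in the connected-by-compact piece (Lemma~\ref{lem:chief_refinement:connected}) one descends through $G$-invariant subgroups of strictly decreasing Lie dimension; in the totally disconnected piece (Lemma~\ref{lem:chief_refinement:tdlc}) one alternates between taking $D_i$ \emph{maximal} with $\deg(\Gamma/D_i)$ fixed and $C_{i+1}$ \emph{minimal} with $\deg(\Gamma/C_{i+1})$ at the next lower value. Monotonicity of degree and dimension then gives an a priori bound on the number of steps. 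The essential content is precisely this careful choice of refinement points; an arbitrary choice, as in your proposal, forfeits the control that makes the argument finite.
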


\begin{cor}Every compactly generated locally compact group admits an essentially chief series.
\end{cor}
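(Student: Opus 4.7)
The corollary is an immediate specialisation of the theorem: applying the theorem with the input sequence $(G_1) = (G)$ (or equivalently the trivial two-term sequence $(\triv, G)$) returns an essentially chief series for $G$. Since there is no genuine content in the corollary beyond this invocation, I devote the rest of this proposal to a sketch of how I would prove the theorem itself.

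The first move is to reduce to a single-interval problem. It suffices to show that whenever $L \leq K$ are closed normal subgroups of $G$, there exists a finite chain $L = H_0 \leq H_1 \leq \dots \leq H_n = K$ of closed normal subgroups of $G$ whose consecutive factors are each compact, discrete, or chief factors of $G$. Applying this local refinement to each consecutive pair $(G_i, G_{i+1})$ of the given sequence and concatenating the resulting chains produces an essentially chief series for $G$ refining $(G_1,\dots,G_m)$.

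For the local refinement I would lean on the finiteness statement for the lattice of closed normal subgroups of a compactly generated locally compact group, which the introduction promises as the first main result of the paper. Informally, one expects that, after factoring out compact and discrete pieces, the normal subgroup lattice of $G$ satisfies strong chain conditions. With such a chain condition on the interval $[L,K]$ in hand, I would iteratively select a closed normal subgroup $H$ of $G$ with $L \leq H < K$ that is maximal subject to $K/H$ being a chief factor, then recurse on $[L,H]$, treating any compact or discrete residue as a separate step in the chain. The chain condition forces the iteration to terminate in finitely many rounds.

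The main obstacle is therefore the finiteness theorem itself, and more importantly its relative form valid on intervals $[L,K]$ of the normal subgroup lattice. Establishing it presumably requires a delicate analysis of how compact generation of $G$ constrains ascending and descending chains of closed normal subgroups, together with a workable criterion recognising when an intermediate factor splits off a compact or discrete quotient. Once that machinery is in place, the construction of essentially chief series becomes essentially bookkeeping: the chain conditions guarantee the iterative extraction of chief factors halts, and every intermediate subgroup that fails to realise a chief factor contributes only compact or discrete factors to the resulting series.
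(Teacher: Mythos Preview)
Your derivation of the corollary is correct and matches the paper: there the corollary is stated with no proof, immediately after the refinement theorem.

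On your sketch of the theorem itself: the paper does \emph{not} deduce the essentially chief series from the abstract filtering/directed finiteness statement. Instead it works directly with two integer invariants---the non-compact real dimension $\dim^\infty_{\Rb}$ on the connected side and the degree of a fixed Cayley-Abels graph on the totally disconnected side. Each interval $[G_j, G_{j+1}]$ is split at the connected component of $G_{j+1}/G_j$; the connected-by-compact part is refined via Lie theory (solvable radical, semisimple quotient, Lemma~\ref{lem:chief_refinement:connected}), and the totally disconnected part is refined by locating the finitely many levels at which $\deg(\Gamma/N)$ jumps, inserting a chief factor at each jump and a compact-by-discrete factor between equal-degree endpoints (Lemma~\ref{lem:chief_refinement:tdlc}).

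Your proposed route has a gap. The filtering/directed theorem tells you only that a given chain eventually lies within compact-by-discrete of its limit; it does not supply a well-founded quantity that strictly changes each time you extract a chief factor. In your recursion, after peeling one chief factor (or one compact/discrete piece) from $[L,K]$ you are left with a smaller interval, but nothing in the abstract finiteness statement bounds how many times this can repeat. The paper's explicit invariants serve precisely this purpose---the number of non-compact, non-discrete chief factors is bounded by $\dim^\infty_{\Rb}(G)+\deg(G)$---and as a byproduct yield the quantitative length bound in part~(2) of the theorem.
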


A Jordan--H\"{o}lder theorem additionally holds for essentially chief series. For our Jordan--H\"{o}lder theorem, the \textit{association} classes of chief factors are uniquely determined:

\begin{defn}[See \cite{RW_P_15}]
For a topological group $G$, normal factors $K_1/L_1$ and $K_2/L_2$ are \defbold{associated} if $\ol{K_1L_2} = \ol{K_2L_1}$ and $K_i \cap \ol{L_1L_2} = L_i$ for $i=1,2$.
\end{defn}
\noindent If chief factors $K_1/L_1$ and $K_2/L_2$ of $G$ are associated, then there is a third normal factor $K/L$ of $G$ such that each $K_i/L_i$ admits a $G$-equivariant continuous monomorphism into $K/L$ with a dense normal image; see \cite[Lemma 6.6]{RW_P_15}. Associated chief factors are not in general isomorphic as topological groups. The association relation is also not an equivalence relation in general, but it becomes one when restricted to non-abelian chief factors; see \cite[Proposition 6.8]{RW_P_15}.  

We must also ignore certain small chief factors.
\begin{defn}
For an \lcsc group $G$, a chief factor $K/L$ is called \textbf{negligible} if it is either abelian or associated to a compact or discrete factor. 
\end{defn}

\begin{thm}[see Theorem~\ref{thm:association_map}]
Suppose that $G$ is a locally compact second countable group and that $G$ has two essentially chief series $(A_i)_{i=0}^m$ and $(B_j)_{j=0}^n$. Define
\begin{align*}
I &:= \{ i \in \{1,\dots,m\} \mid A_i/A_{i-1} \text{ is a non-negligible chief factor of $G$}\}; and \\
J &:= \{ j \in \{1,\dots,n\} \mid B_j/B_{j-1} \text{ is a non-negligible chief factor of $G$}\}.
\end{align*}
Then there is a bijection $f:I \rightarrow J$ where $f(i)$ is the unique element $j \in J$ such that $A_i/A_{i-1}$ is associated to $B_j/B_{j-1}$. 
\end{thm}

In \cite{RW_LC_16}, we show negligible non-abelian chief factors are limited in topological complexity. Specifically, they are either connected and compact or totally disconnected with dense quasi-center. We also show the chief factors themselves have a tractable internal normal subgroup structure.

\begin{rmk}
The essentially chief series seems to be a useful tool with which to study compactly generated locally compact groups. Via the existence of the series, questions can often be reduced to chief factors, which are topologically characteristically simple. The essentially chief series can also be used to establish non-existence results. For example, the uniqueness given by Theorem~\ref{thm:association_map} ensures any finitely generated just infinite branch group has no infinite commensurated subgroups of infinite index; see \cite{W17}. 
\end{rmk}

Our results follow from a finiteness property of the lattice of closed normal subgroups, which generalizes results of Caprace--Monod in \cite{CM11}. The essential tool is given by the \defbold{Cayley--Abels graph}; this graph is a connected locally finite graph on which the group in question acts vertex-transitively with compact open point stabilizers. The \emph{finite degree} of such graphs along with the usual dimension from Lie theory provide the finiteness from which all other finiteness properties in this paper are deduced.

A family of closed normal subgroups $\mc{F}$ is \textbf{filtering} if for all $N,M\in \mc{F}$, there is $K\in \mc{F}$ with $K\leq N\cap M$. A family $\mc{D}$ is \textbf{directed} if for all $N,M\in \mc{D}$, there is $K\in \mc{D}$ with $N\cup M\subseteq K$.

\begin{thm}[See Theorem~\ref{thm:essential_finiteness}]\label{thmintro:essential_finiteness} Let $G$ be a compactly generated locally compact group.
\begin{enumerate}[(1)]
\item If $\mc{F}$ is a filtering family of closed normal subgroups of $G$, then there exists $N \in \mc{F}$ and a closed normal subgroup $K$ of $G$ such that $\bigcap \mc{F} \le K \le N$, $K/\bigcap \mc{F}$ is compact, and $N/K$ is discrete.

\item If $\mc{D}$ is a directed family of closed normal subgroups of $G$, then there exists $N \in \mc{D}$ and a closed normal subgroup $K$ of $G$ such that $N \le K \le \cgrp{\mc{D}}$, $K/N$ is compact, and $\cgrp{\mc{D}}/K$ is discrete.
\end{enumerate}
\end{thm}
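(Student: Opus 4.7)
The plan is to attach to each member of the family two bounded nonnegative integer invariants -- a Lie dimension and one derived from the degree of the Cayley--Abels graph -- each monotone along the family. Stabilization of both will then force the desired compact-by-discrete structure; we describe part~(1), with part~(2) being dual. By the remark following the introduction, we may assume $G$ is second countable; a standard countable-basis argument then extracts a countable cofinal filtered subfamily $\{N_k\}_{k\ge 0} \subseteq \mc{F}$ with $\bigcap_k N_k = \bigcap\mc{F}$, arranged as $N_0 \supseteq N_1 \supseteq \cdots$. Quotienting by $\bigcap\mc{F}$, we may assume $\bigcap_k N_k = \triv$; the goal becomes to produce some $N_k$ with a compact open normal subgroup.

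For the connected part, Yamabe's theorem provides a compact $Z \le G^\circ$, normal in $G$, with $G^\circ/Z$ a connected Lie group of finite dimension $D$. The function $H \mapsto \dim((H\cap G^\circ)Z/Z)$ takes values in $\{0,\ldots,D\}$; along $(N_k)$ it is non-increasing, hence stabilizes, and since $\bigcap_k N_k = \triv$, it stabilizes at $0$. So for large $k$, $N_k^\circ \le Z$ is compact. Since $N_k$ is compact-by-discrete iff its image in the tdlc quotient is, we reduce to $G$ tdlc.

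Let $\Gamma$ be a Cayley--Abels graph of $G$, with compact open vertex stabilizer $U = U_{v_0}$ and finite degree $d$. The key observation is: \emph{if $N_k v_0$ is finite, then $N_k$ is compact-by-discrete}, since the pointwise stabilizer of $N_k v_0$ in $N_k$ -- equal to $N_k \cap \bigcap_{n \in N_k} nUn^{-1}$, a finite intersection of compact open subgroups -- is a compact open normal subgroup of $N_k$. Hence it suffices to show some $N_k v_0$ is finite. The invariant $\delta_r(k) := |B_r(v_0)/N_k|$ is non-decreasing in $k$ and bounded above by $|B_r(v_0)|$. From $\bigcap_k N_k = \triv$ and compactness of $U$, one deduces $\bigcap_k N_kU = U$, so for each $r$, $\delta_r(k) = |B_r(v_0)|$ for all large $k$. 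The main obstacle is converting this ``per-$r$'' separation into a single $k$ with globally finite orbit $N_k v_0$; this is where compact generation enters decisively, since the finite degree $d$ of $\Gamma$ bounds the branching of orbits and, together with a diagonal compactness argument, propagates local control into the required global statement. Part~(2) is dual: apply the framework to the ascending chain of open subgroups $\{N_k U\}$ with union $\cgrp{\mc{D}} \cdot U$; the finite-degree quotient graphs $\Gamma/N_k$ force the index $[\cgrp{\mc{D}} \cdot U : N_kU]$ to be eventually finite, which yields the required closed normal subgroup $K$ with $K/N$ compact and $\cgrp{\mc{D}}/K$ discrete.
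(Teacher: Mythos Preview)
Your high-level plan---attaching to each $N$ in the family a Lie-theoretic dimension and a graph-theoretic invariant, and exploiting stabilisation of both---is exactly the strategy the paper uses. The execution, however, has a genuine gap in the totally disconnected step, and part~(2) is not ``dual'' in the way you suggest.

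\textbf{The t.d.\ argument does not work as written.} Your criterion for success is that some $N_k$ have \emph{finite} orbit $N_kv_0$, but this is neither what you need nor true in general. Take $G=\Zb$ acting on its standard Cayley graph and $N_k=2^k\Zb$: then $\bigcap_k N_k=\{0\}$, each $N_k$ is already discrete (so the theorem holds trivially with $K=\{0\}$), yet every orbit $N_kv_0$ is infinite. Thus your ``main obstacle'' paragraph cannot be completed, because its target statement is false. Even when $N_kv_0$ happens to be finite, the compact open subgroup you exhibit---the pointwise stabiliser of $N_kv_0$ inside $N_k$---is normal in $N_k$ but generally not in $G$, whereas the theorem demands $K\trianglelefteq G$.

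The correct graph invariant is not the orbit size but the \emph{degree of the quotient graph} $\deg(\Gamma/N)$. This takes only the finitely many values $0,\dots,\deg(\Gamma)$ and is monotone in $N$; a short compactness argument in the finite symmetric group on the star $o^{-1}(v_0)$ shows that some $N\in\mc F$ satisfies $\deg(\Gamma/N)=\deg(\Gamma/\bigcap\mc F)$. After quotienting by $\bigcap\mc F$ this says $N$ acts freely modulo kernel on $\Gamma$, and then $K:=N\cap\ker(G\curvearrowright\Gamma)$ is compact, open in $N$, and normal in $G$. This replaces your unfinished ``diagonal compactness'' step with a one-line finiteness argument.

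\textbf{On the reduction to the t.d.\ case.} The sentence ``$N_k$ is compact-by-discrete iff its image in the t.d.l.c.\ quotient is'' is not justified: if $L/G^\circ$ is compact open and $G$-normal in $\ol{N_kG^\circ}/G^\circ$, then $L\cap N_k$ is open in $N_k$ but is only compact modulo $N_k\cap G^\circ$, which you have shown to be compact-by-\emph{discrete}, not compact. The paper sidesteps this by working inside the kernel $E$ of the Cayley--Abels action (which is connected-by-compact, so Gleason--Yamabe applies to $E$ directly) and tracking both invariants simultaneously rather than reducing to the t.d.\ case first.

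\textbf{Part~(2) is not merely dual.} In the directed case the Lie-theoretic step is genuinely different: after stabilising the degree and passing to the Lie quotient, one is left with a connected \emph{abelian} Lie group that is the closure of a directed union of discrete subgroups, and one needs the separate fact that some member of such a family is cocompact in $\Rb^a\times\Tb^b$. Your one-sentence sketch does not address this, and the index statement $[\cgrp{\mc D}U:N_kU]<\infty$ is again the wrong target for the same reason as in part~(1).
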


Theorem~\ref{thmintro:essential_finiteness} implies additionally the existence of interesting quotients. For a property of groups $P$, a topological group $G$ is called \textbf{just-non-$P$} if $G$ does not have $P$, but every proper non-trivial Hausdorff quotient $G/N$ has property $P$.

\begin{thm}[See Theorem~\ref{thm:just-non-P}]
Let $P$ be a property of compactly generated locally compact groups.  If all groups with $P$ are compactly presented and $P$ is closed under compact extensions, then for any compactly generated locally compact group $G$, exactly one of the following holds:
	\begin{enumerate}[(1)]
		\item Every non-trivial quotient of $G$ (including $G$ itself) has $P$; or 
		\item $G$ admits a quotient that is just-non-$P$.
	\end{enumerate}
\end{thm}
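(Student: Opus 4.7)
The plan is to apply Zorn's lemma to the family
\[
\mc{F} := \{N \trianglelefteq G : N \text{ closed}, N < G, \text{ and } G/N \text{ does not have } P\}.
\]
If $\mc{F} = \emptyset$, then taking $N = \triv$ shows that $G$ itself has $P$ and every non-trivial quotient $G/N$ has $P$, placing us in case~(1). Otherwise, any maximal element $N_0 \in \mc{F}$ yields a just-non-$P$ quotient $G/N_0$: a proper non-trivial quotient of $G/N_0$ has the form $G/M$ with $N_0 < M < G$, and maximality of $N_0$ forces $G/M$ to have $P$, placing us in case~(2).

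The content of the argument is to verify that every chain $\mc{C} \subseteq \mc{F}$ has an upper bound in $\mc{F}$. Let $M := \cgrp{\mc{C}}$; I will show $M \in \mc{F}$. Suppose for contradiction that $G/M$ has $P$, hence is compactly presented. Applying Theorem~\ref{thmintro:essential_finiteness}(2) to the directed family $\mc{C}$ produces $N \in \mc{C}$ and a closed normal subgroup $K$ of $G$ with $N \le K \le M$, $K/N$ compact and $M/K$ discrete. Since $G/K$ is compactly generated and $(G/K)/(M/K) = G/M$ is compactly presented, the standard fact that kernels of compactly presented quotients of compactly generated locally compact groups are compactly normally generated yields $M/K = \ngrpd{F}$ for some compact $F \subseteq M/K$; discreteness of $M/K$ forces $F$ to be finite.

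Combining $M = \overline{\bigcup \mc{C}}$ with the discreteness of $M/K$ gives $M/K = \bigcup_{N' \in \mc{C}} N'K/K$, since the right side is dense in the left and a dense subset of a discrete group is the whole group. The chain property and finiteness of $F$ then produce $N'' \in \mc{C}$, which we may take above $N$, with $F \subseteq N''K/K$; hence $M/K = \ngrpd{F} \le N''K/K$ in $G/K$, so $M = N''K$ and $M/N'' \cong K/(K \cap N'')$ is a quotient of the compact group $K/N$, hence compact. Then $G/N''$ is a compact extension of $G/M$, which has $P$, so by closure under compact extensions $G/N''$ has $P$, contradicting $N'' \in \mc{F}$. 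This shows $G/M$ does not have $P$; the case $M = G$ is ruled out by the same reasoning (using that compact groups have $P$, by closure under compact extensions applied to the trivial group), so $M \in \mc{F}$ as required.

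The main obstacle is the extraction step in paragraphs two and three: one needs both halves of Theorem~\ref{thmintro:essential_finiteness}(2) to cooperate. The compactness of $K/N$ allows $K$ to be absorbed into $M/N''$ without disturbing compactness, while the discreteness of $M/K$ simultaneously upgrades ``compactly normally generated'' to ``finitely normally generated'' and ``dense'' to ``equal'', so that the finite set $F$ lies in a single $N''K/K$ of the chain.
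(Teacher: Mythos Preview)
Your proof is essentially the same as the paper's: both apply Zorn's lemma to the family $\mc{F}$ of proper closed normal subgroups $N$ with $G/N$ not having $P$, and verify the chain condition by the identical mechanism (Theorem~\ref{thmintro:essential_finiteness}(2) to produce $N \le K \le M$ with $K/N$ compact and $M/K$ discrete, then compact presentation of $G/M$ to get $M/K$ finitely normally generated in $G/K$, then absorb the finite set into some $N''K/K$ along the chain, yielding $G/N''$ a compact extension of $G/M$).

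One small remark: your parenthetical justification for ruling out $M = G$ assumes that the trivial group has $P$ (whence compact groups have $P$ by closure under compact extensions). This is not part of the stated hypotheses, so strictly speaking it is an extra assumption; the paper's proof does not address the case $L = G$ explicitly either. In practice this is harmless for the intended applications, but your argument at that step is not self-contained as written.
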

Any quasi-isometry invariant property is stable under compact extensions, so these properties occasionally fall under the previous theorem. 

\begin{cor}
Let $G$ be a compactly generated locally compact group that does not have polynomial growth. Then $G$ admits a non-trivial quotient that is just-not-(of polynomial growth).
\end{cor}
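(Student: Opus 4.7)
The plan is to apply the just-non-$P$ theorem (Theorem~\ref{thm:just-non-P}) with $P$ being the property ``has polynomial growth.'' The hypothesis of the corollary then becomes exactly that $G$ itself fails $P$, so the desired conclusion is the second horn of the dichotomy. It remains to verify the two standing assumptions of Theorem~\ref{thm:just-non-P}: (a) every compactly generated locally compact group of polynomial growth is compactly presented; and (b) polynomial growth is closed under compact extensions.

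For (a), I would invoke the well-known structure theorem of Losert, which says that every compactly generated \lcsc group of polynomial growth admits a compact normal subgroup modulo which it is a connected Lie group. Combined with the opening remark of the introduction, this reduces the general compactly generated case to the \lcsc case and then to a compact extension of a connected Lie group of polynomial growth. Connected Lie groups are compactly presented, and compact presentability is preserved under compact extensions, so the original group is compactly presented.

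For (b), I would argue directly from the definition of growth. Let $N \trianglelefteq G$ be compact with $G/N$ of polynomial growth, let $\pi:G \to G/N$ be the quotient map, and let $S$ be a compact symmetric generating set of $G$. Then $\pi(S)$ generates $G/N$, and $S^n \subseteq \pi^{-1}(\pi(S)^n)$; since $N$ has finite Haar measure, the Haar measure of $S^n$ is bounded by a constant multiple of that of $\pi(S)^n$, and therefore grows polynomially in $n$.

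With (a) and (b) in hand, Theorem~\ref{thm:just-non-P} gives the dichotomy that either every non-trivial Hausdorff quotient of $G$ has polynomial growth (which forces $G$ itself to have polynomial growth, contradicting the hypothesis) or $G$ admits a just-non-$P$ quotient; only the latter option is open. I expect the one non-routine input to be the appeal to Losert's structure theorem in (a); the verification of (b) and the reduction to Theorem~\ref{thm:just-non-P} are elementary.
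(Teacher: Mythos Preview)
Your strategy is exactly the paper's: verify the two hypotheses of Theorem~\ref{thm:just-non-P} for $P=$ ``polynomial growth'' and read off the conclusion. The paper simply cites both hypotheses from Cornulier--de la Harpe rather than proving them by hand: polynomial growth is a quasi-isometry invariant (hence closed under compact extensions by \cite[Proposition~1.D.4]{CdH14}), and compactly generated groups of polynomial growth are compactly presented by \cite[Proposition~8.A.23]{CdH14}.

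One correction to your hand-verification of (a): Losert's theorem does \emph{not} yield a \emph{connected} Lie quotient in general---take $G=\Zb$, where every compact normal subgroup is trivial. It gives a Lie group quotient $G/K$ of polynomial growth; to conclude compact presentability you then need an extra step (e.g.\ $(G/K)^\circ$ is a connected Lie group, hence compactly presented, and $(G/K)/(G/K)^\circ$ is a finitely generated discrete group of polynomial growth, hence virtually nilpotent by Gromov and so finitely presented). Also note that ``closed under compact extensions'' as defined in the paper is an \emph{if and only if}; you argued only the direction actually used in the proof of Theorem~\ref{thm:just-non-P}, but the converse (polynomial growth passes to quotients) is equally elementary and should be mentioned.
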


\begin{ack}
Many of the ideas for this project were developed during a stay at the Mathematisches Forschungsinstitut Oberwolfach; we thank the institute for its hospitality. We also thank the anonymous referees for their detailed suggestions.
\end{ack}

\section{Preliminaries}
\subsection{Notations and generalities}

All groups are taken to be Hausdorff topological groups and are written multiplicatively. Topological group isomorphism is denoted by $\simeq$. We use ``t.d.", ``l.c.", and ``s.c." for ``totally disconnected", ``locally compact", and ``second countable."

The collection of closed normal subgroups of a topological group $G$ is denoted by $\Norm(G)$.  The connected component of the identity is denoted by $G^\circ$. For any subset $K\subseteq G$, $\CC_G(K)$ is the collection of elements of $G$ that centralize every element of $K$. We denote the collection of elements of $G$ that normalize $K$ by $\N_G(K)$. The topological closure of $K$ in $G$ is denoted by $\ol{K}$. If $G$ acts on a set $X$, $G_{(x)}$ denotes the stabilizer of $x\in X$ in $G$.

A topological group is \textbf{Polish} if it is separable and admits a complete, compatible metric. A locally compact group is Polish if and only if it is second countable; cf. \cite[(5.3)]{K95}.

For a poset $\mc{P}$, a \defbold{filtering family} $\mc{F}\subseteq \mc{P}$ in $\mc{P}$ is a subset of $\mc{P}$ such that for all $N,M\in \mc{F}$, there exists $L\in \mc{F}$ with $L \le M$ and $L \le N$. Dual to this notion, $\mc{D}\subseteq \mc{P}$ is a \defbold{directed family} if for all $M,N\in \mc{D}$, there is $L\in \mc{D}$ with $M \le L$ and $N \le L$.

\subsection{Chief factors and chief blocks}\label{sec:chief_factor}
We here recall the basic theory established in \cite{RW_P_15}. In the present work, this theory is lightly used to establish the Jordan--H\"{o}lder theorem.

\begin{defn}
A \defbold{normal factor} of a topological group $G$ is a quotient $K/L$ such that $K$ and $L$ are distinct closed normal subgroups of $G$ with $L < K$. We say that $K/L$ is a (topological) \defbold{chief factor} of $G$ if there are no closed normal subgroups of $G$ strictly between $L$ and $K$. 
\end{defn}

There is a natural notion of `equivalence' between chief factors:
\begin{defn}
For a topological group $G$, normal factors $K_1/L_1$ and $K_2/L_2$ are \defbold{associated} if $\ol{K_1L_2} = \ol{K_2L_1}$ and $K_i \cap \ol{L_1L_2} = L_i$ for $i=1,2$.
\end{defn}

Association is not an equivalence relation in general, but it becomes one when restricted to the set of non-abelian chief factors of a topological group $G$; see \cite[Proposition 6.8]{RW_P_15}.  For a non-abelian chief factor $K/L$, the equivalence class of non-abelian chief factors equivalent to $K/L$ is denoted by $[K/L]$. The class $[K/L]$ is called a \textbf{chief block} of $G$, and the set of chief blocks of $G$ is denoted by $\mf{B}_G$.

Given a Polish group $G$ and normal subgroups $N \le M$ of $G$, we say that $M/N$ \defbold{covers} $[K/L]$ if there exist closed normal subgroups $N \le B < A \le M$ of $G$ for which $A/B$ is a non-abelian chief factor associated to $K/L$.  Otherwise we say that $M/N$ \defbold{avoids} $[K/L]$.  We say that $M$ covers or avoids $[K/L]$ if $M/\triv$ does.

Our Jordan--H\"{o}lder theorem is a consequence of the following general refinement theorem.

\begin{thm}[{\cite[Theorem 1.14]{RW_P_15}}]\label{thmintro:Schreier_refinement}
Let $G$ be a Polish group, $K/L$ be a non-abelian chief factor of $G$, and
\[
\triv = G_0 \le G_1 \le \dots \le G_n = G
\]
be a series of closed normal subgroups in $G$.  Then there is exactly one $i \in \{0,\dots,n-1\}$ such that $G_{i+1}/G_i$ covers $[K/L]$.\end{thm}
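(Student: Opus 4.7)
The plan is to prove existence and uniqueness of the index $i$ separately. For existence, I would induct on $n$ via a key lemma asserting that ``avoidance of $[K/L]$'' is closed under $G$-normal extensions. For uniqueness, I would argue directly from the definition of association, leveraging the fact that association is an equivalence relation on non-abelian chief factors by \cite[Proposition 6.8]{RW_P_15}. Observe first that $G$ itself covers $[K/L]$, since $K/L$ is a non-abelian chief factor of $G$ associated to itself (take $B = L$, $A = K$). The key lemma states: if $M \le N$ are closed normal in $G$ with $M$ avoiding $[K/L]$ and $N/M$ avoiding $[K/L]$, then $N$ avoids $[K/L]$. Granted this, if every $G_{i+1}/G_i$ avoided $[K/L]$ then induction on the series would give that each $G_i$ avoids $[K/L]$, contradicting $G = G_n$ covering $[K/L]$.

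To prove the key lemma, suppose $N$ covers $[K/L]$ with witnessing $G$-normal chief factor $A/B$ (so $B < A \le N$), and let $M$ be closed normal in $G$ with $M \le N$. Since $A/B$ is chief and $\ol{B(A \cap M)}$ is a $G$-normal closed subgroup between $B$ and $A$, we have $\ol{B(A \cap M)} \in \{B,A\}$. In the first case $A \cap M \le B$, and a lattice computation shows that a suitable $G$-chief factor of $\ol{AM}/\ol{BM}$ sits inside $N/M$ and is associated to $A/B$, so $N/M$ covers $[K/L]$. In the second case, $A \cap M$ sits inside $M$ and maps densely onto $A/B$ via the projection $A \to A/B$; combining this density with the chief-block machinery of \cite{RW_P_15}, one extracts a $G$-chief factor inside $(A \cap M)/(B \cap M) \le M$ associated to $A/B$, so $M$ covers $[K/L]$. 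Either outcome contradicts the hypothesis.

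For uniqueness, suppose for contradiction that $i < j$ with both $G_{i+1}/G_i$ and $G_{j+1}/G_j$ covering $[K/L]$, witnessed by $G$-normal chief factors $A_1/B_1$ with $G_i \le B_1 < A_1 \le G_{i+1}$ and $A_2/B_2$ with $G_j \le B_2 < A_2 \le G_{j+1}$, both associated to $K/L$. By transitivity of association on non-abelian chief factors, $A_1/B_1$ and $A_2/B_2$ are themselves associated. Since $i+1 \le j$, we have $A_1 \le G_{i+1} \le G_j \le B_2 \le \ol{B_1 B_2}$, so $A_1 \cap \ol{B_1 B_2} = A_1$. But the association condition forces $A_1 \cap \ol{B_1 B_2} = B_1$, so $A_1 = B_1$, contradicting $B_1 < A_1$.

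The main obstacle is the second case of the key lemma. In a general Polish group, chief factors of $G$ between two given closed normal subgroups need not exist, so extracting a chief factor inside $(A \cap M)/(B \cap M)$ from the density of $A \cap M$ in $A/B$ alone requires genuine input from the \cite{RW_P_15} framework --- presumably through canonical representatives of the chief block $[K/L]$ together with the observation that dense normal embeddings preserve association. Handling the topological closures of products ($\ol{AM}$ versus $AM$, etc.) when verifying the association identities is the additional technical subtlety where most of the work would lie.
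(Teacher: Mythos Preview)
This theorem is not proved in the present paper at all: it is quoted verbatim as \cite[Theorem~1.14]{RW_P_15} and used as a black box in Section~\ref{sec:chief}. There is therefore no ``paper's own proof'' here to compare your proposal against; any comparison would have to be with the argument in \cite{RW_P_15}.

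That said, your outline is broadly along the right lines and your uniqueness argument is clean and correct. For existence, reducing to the key lemma (covering by $N$ forces covering by $M$ or by $N/M$) is the natural strategy, and your case split on whether $\overline{B(A\cap M)}$ equals $B$ or $A$ is the obvious move. You correctly flag Case~2 as the hard one: in a general Polish group there is no descending chain condition, so the mere fact that $(A\cap M)B$ is dense in $A$ does not by itself produce a chief factor inside $(A\cap M)/(B\cap M)$. What you may not have noticed is that Case~1 has a parallel gap: even once you check that $A/B$ is associated to $\overline{AM}/\overline{BM}$ (which goes through when $A\cap\overline{BM}=B$), you still need $\overline{AM}/\overline{BM}$ to be a \emph{chief} factor, or to locate a chief subfactor of it associated to $A/B$, and the same lack of chain conditions bites here. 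The machinery in \cite{RW_P_15} that resolves both cases simultaneously is the centralizer $\mathrm{C}_G(K/L)$ of a non-abelian chief factor: one shows this depends only on the block $[K/L]$, that $G/\mathrm{C}_G(K/L)$ furnishes a canonical ``uppermost'' representative, and that a closed normal subgroup $M$ covers $[K/L]$ precisely when $M\nleq \mathrm{C}_G(K/L)$. With that characterization the key lemma becomes a one-line observation, and your inductive existence argument and your uniqueness argument then go through exactly as written.
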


\subsection{Background on locally compact groups} A closed subgroup $K$ of a locally compact group $G$ is \defbold{cocompact} if the coset space $G/K$ is compact when equipped with the quotient topology. 

A locally compact group $G$ is \defbold{locally elliptic} if every finite subset of $G$ is contained in a compact subgroup.  The  \defbold{locally elliptic radical}, denoted by $\Rad{\mc{LE}}(G)$, is the union of all closed normal locally elliptic subgroups of $G$. 

\begin{thm}[Platonov, \cite{Plat66}]\label{thm:platonov_radical}
For $G$ a locally compact group, $\Rad{\mc{LE}}(G)$ is the unique largest locally elliptic closed normal subgroup of $G$, and 
\[
\Rad{\mc{LE}}(G/\Rad{\mc{LE}}(G)) = \triv.
\] 
\end{thm}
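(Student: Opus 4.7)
The plan is to reduce the theorem to the structural fact that the class of locally elliptic locally compact groups is stable under both closed-subgroup extensions and under taking the topological closure of a normal subgroup. Granting these two closure properties, the argument runs as follows. The family $\mc{F}$ of closed normal locally elliptic subgroups of $G$ is directed under inclusion: for $N_1, N_2 \in \mc{F}$, the closed subgroup $\overline{N_1 N_2}$ is normal in $G$, admits $N_1$ as a locally elliptic closed normal subgroup, and has quotient $\overline{N_1 N_2}/N_1$ equal to the closure in $G/N_1$ of the continuous image of $N_2$; applying the closure- and extension-stability places $\overline{N_1 N_2}$ in $\mc{F}$. Setting $R := \overline{\bigcup \mc{F}}$, an approximation argument (tracking finite subsets of $R$ back into individual members of $\mc{F}$ and then re-applying the same compactness techniques) shows $R$ is itself locally elliptic, so $R = \Rad{\mc{LE}}(G)$ is the unique maximum. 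The second conclusion follows formally: if $\overline{Q} \trianglelefteq G/R$ is closed and locally elliptic, then its preimage $P$ in $G$ satisfies $P/R \cong \overline{Q}$ and extension-stability forces $P$ locally elliptic, whence $P = R$ by maximality and $\overline{Q} = \triv$.

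To prove the extension-stability lemma, I would first reduce to the compact-by-locally-elliptic case. Given finite $F \subset H$, the image $\pi(F) \subset H/N$ lies in a compact subgroup $C$ by local ellipticity of $H/N$, so we may replace $H$ with $\pi^{-1}(C)$ and assume $H/N$ is compact. In this reduced setting I would pick a compact $K \subset H$ with $\pi(K) = H/N$ and write each $f_i \in F$ as $f_i = k_i n_i$ with $k_i \in K$ and $n_i \in N$. The normality of $N$ in $H$ makes the set $A = \bigcup_i K n_i K^{-1}$ a compact subset of $N$, and the problem reduces to finding a compact subgroup of $H$ that contains both $K$ and $A$.

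The main obstacle lies in this final step, which requires upgrading local ellipticity from finite to compact subsets: in a locally elliptic locally compact group, every compact subset must lie in a compact subgroup. I would establish this via an inductive covering procedure---cover $A$ by finitely many compact subgroups produced by local ellipticity on finite subsets, then enlarge these to a single compact subgroup by exploiting the directed structure of compact subgroups controlled by finite generating sets. The resulting compact subgroup of $N$, together with $K$ and normality of $N$, then yields a compact subgroup of $H$ containing $F$ via a final application of the (trivial) compact-by-compact extension case. This compact-set upgrade, together with the approximation step needed to close up the directed-union argument, constitutes the principal technical difficulty in Platonov's theorem.
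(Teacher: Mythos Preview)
The paper does not prove this statement: it is quoted from Platonov's original article and used as a black box, so there is no argument in the paper to compare yours against.

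Your outline follows the standard route and correctly isolates the two ingredients (extension stability and the upgrade from finite to compact subsets inside a locally elliptic group). The place where the proposal is genuinely incomplete is the last step of the extension argument. After reducing to $H/N$ compact with $N$ locally elliptic, you choose a compact \emph{set} $K\subset H$ with $\pi(K)=H/N$, form $A=\bigcup_i Kn_iK^{-1}\subset N$, and find a compact subgroup $C\le N$ containing $A$. You then assert that $C$ ``together with $K$ and normality of $N$'' produces a compact subgroup of $H$ containing $F$ via the compact-by-compact case. But that case requires a closed subgroup $L\le H$ with $L\cap N$ compact and $L/(L\cap N)$ compact, and nothing you have built gives such an $L$: $K$ is merely a compact set, $C$ need not be normalized by $K$, and your set $A$ only records conjugates of the $n_i$ by \emph{single} elements of $K$, whereas expanding words in the $f_i$ produces conjugates by arbitrary \emph{words} in $K$, so $\overline{\langle F\rangle}\cap N$ can escape $C$ entirely. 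A correct completion has to do real work at exactly this point---for example, show that a cocompact closed normal subgroup of a compactly generated locally compact group is itself compactly generated (so that $\overline{\langle F\rangle}\cap N$ is compactly generated and hence compact by local ellipticity of $N$), or instead pass to a compact \emph{open} subgroup $U\le N$ and use that $\N_H(U)$ is open in $H$. Either way, the step you call ``trivial'' is where the substance of the extension lemma actually lives.
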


A (real) \defbold{Lie group} is a topological group that is a finite-dimensional analytic manifold over $\Rb$ such that the group operations are analytic maps. A Lie group $G$ can have any number of connected components, but $G^\circ$ is always an \emph{open} subgroup of $G$. The group $G/G^\circ$ of components is thus discrete.

\begin{thm}[Gleason--Yamabe; see {\cite[Theorem 4.6]{MZ}}]\label{thm:yamabe_radical}
Let $G$ be a locally compact group. If $G/G^\circ$ is compact, then $\Rad{\mc{LE}}(G)$ is compact, and the quotient $G/\Rad{\mc{LE}}(G)$ is a Lie group with finitely many connected components.
\end{thm}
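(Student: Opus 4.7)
The approach is to reduce the statement to the classical Gleason--Yamabe theorem for almost-connected locally compact groups: if $G/G^\circ$ is compact, then $G$ admits arbitrarily small compact normal subgroups $K$ for which $G/K$ is a Lie group. Taking this as input, I would first pick such a $K$. The component group $(G/K)/(G/K)^\circ$ is discrete (since $G/K$ is Lie) and a continuous image of the compact group $G/G^\circ$; being both compact and discrete it is finite. So $G/K$ is a Lie group with finitely many components, settling the second half of the conclusion modulo identifying $\Rad{\mc{LE}}(G)$.

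The next step is a bookkeeping lemma: for any compact normal subgroup $K$ of a locally compact group $H$, we have $\Rad{\mc{LE}}(H/K) = \Rad{\mc{LE}}(H)/K$. The inclusion $\Rad{\mc{LE}}(H)/K \subseteq \Rad{\mc{LE}}(H/K)$ is immediate since continuous images of locally elliptic groups are locally elliptic, using that $K \subseteq \Rad{\mc{LE}}(H)$ because $K$ is compact. For the converse, if $N/K$ is closed normal and locally elliptic in $H/K$, then for any finitely generated $F\subseteq N$, the image of $F$ in $N/K$ has compact closure $C$; since $K$ is compact, the preimage of $C$ in $N$ is compact, and hence $\overline{\langle F\rangle}$ is compact. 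Thus $N$ is locally elliptic and contained in $\Rad{\mc{LE}}(H)$. Applying this with $H = G$ and granting the Lie-group case below, $\Rad{\mc{LE}}(G)$ is a compact-by-compact extension, hence compact, and $G/\Rad{\mc{LE}}(G) \simeq (G/K)/\Rad{\mc{LE}}(G/K)$ is a quotient of the Lie group $G/K$, hence a Lie group with finitely many components.

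The core remaining task, and the main obstacle, is to show that for a Lie group $L$ with finitely many components, $N := \Rad{\mc{LE}}(L)$ is compact. The identity component $N^\circ$ is a connected locally elliptic Lie group, so every one-parameter subgroup $\exp(tX)$ in $N^\circ$ has compact closure; this forces the Lie algebra of $N^\circ$ to be a compact Lie algebra, so $N^\circ$ is a compact connected Lie group. Since $N^\circ$ is characteristic in the normal subgroup $N$ of $L$, it is also normal in $L$, and in the quotient $L' := L/N^\circ$ the image of $N$ is a closed discrete normal locally finite subgroup. Because a discrete normal subgroup of a connected group is central, $N/N^\circ$ meets $(L')^\circ$ inside $Z((L')^\circ)$; using that $Z((L')^\circ)$ is an abelian Lie group with finitely generated component group and that closed discrete torsion subgroups of $\Tb^a\times \Rb^b$ are finite, together with the finiteness of $L/L^\circ$, one deduces that $N/N^\circ$ is finite. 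Therefore $N$ is a finite extension of a compact group, hence compact. This last Lie-structure step, showing finiteness of the component group of $\Rad{\mc{LE}}(L)$, is the main obstacle, since in general a closed locally finite subgroup of a topological group need not be finite; the assumption that $L$ is a Lie group with finitely many components is what forces the required finiteness.
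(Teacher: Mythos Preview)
The paper does not give its own proof of this theorem: it is quoted as the Gleason--Yamabe theorem with a citation to \cite[Theorem~4.6]{MZ} and used as a black box thereafter, so there is nothing in the paper to compare your argument against. Your overall strategy---reduce to the classical ``arbitrarily small compact normal subgroups with Lie quotient'' formulation, pass the locally elliptic radical through a compact quotient, and then handle the Lie case directly---is the standard route, and your bookkeeping lemma $\Rad{\mc{LE}}(H/K)=\Rad{\mc{LE}}(H)/K$ for compact normal $K$ is correct.

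There is, however, a genuine gap in the Lie step. From ``every one-parameter subgroup of $N^\circ$ has compact closure'' you infer that the Lie algebra of $N^\circ$ is of compact type, and then write ``so $N^\circ$ is a compact connected Lie group.'' That implication is false as stated: a connected Lie group whose Lie algebra is of compact type need not be compact---$\Rb$ itself, or the universal cover of any non-simply-connected compact Lie group, are counterexamples. What you actually need (and what is true) is the stronger statement that a connected Lie group in which every one-parameter subgroup has compact closure is itself compact; this requires an additional argument, for instance writing the simply connected cover as $K_s\times\Rb^n$ with $K_s$ compact and showing that boundedness of one-parameter subgroups in every direction forces the deck group to project onto a full lattice in the $\Rb^n$ factor. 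A smaller point that also deserves a sentence of justification is your assertion that $Z((L')^\circ)$ has finitely generated component group; this is true for the center of any connected Lie group, but it is not entirely trivial.
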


Theorem~\ref{thm:yamabe_radical} suggests a notion of dimension applicable to all locally compact groups. 
\begin{defn}
For a locally compact group $G$, the \defbold{non-compact real dimension}, denoted by $\dim^\infty_{\Rb}(G)$, is the dimension of $G^\circ/\Rad{\mc{LE}}(G^\circ)$ as a real manifold.
\end{defn}
The non-compact real dimension is always finite. It is \textit{superadditive}, not subadditive, with respect to extensions. Additionally, $\dim^\infty_{\Rb}(G) = 0$ if and only if $G$ is compact-by-(totally disconnected).

The following technical consequence of Theorem~\ref{thm:yamabe_radical} will be useful later:

\begin{lem}\label{lem:sgrp_con}
Suppose $G$ is a locally compact group with closed normal subgroups $H<L$. If $L$ is connected, then there is a closed normal subgroup $K\normal G$ such that $ H^{\circ}\leq K\leq H$ with $K/H^{\circ}$ compact and $H/K$ discrete. In particular, $K/K^{\circ}$ is compact.
\end{lem}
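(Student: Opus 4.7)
The plan is to pass to the quotient $\bar L := L/H^\circ$ and apply the Gleason--Yamabe theorem. Write $\Pi : G \to G/H^\circ$ for the quotient map and set $\bar H := H/H^\circ$, which is a closed normal subgroup of $\bar L$ (and indeed of $G/H^\circ$). Since $L$ is connected, so is $\bar L$, and in particular $\bar L$ is almost connected. Theorem~\ref{thm:yamabe_radical} then yields that $R := \Rad{\mc{LE}}(\bar L)$ is a compact characteristic subgroup of $\bar L$, with $\bar L/R$ a connected real Lie group. Since $R$ is characteristic in the $G/H^\circ$-normal subgroup $\bar L$, and $\bar H$ is also normal in $G/H^\circ$, the subgroup $\bar H \cap R$ is normal in $G/H^\circ$.

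The crux of the proof is to show that $\bar H \cap R$ is open in $\bar H$. Since $R$ is compact, $\bar H R$ is closed in $\bar L$, and the natural map $\bar H/(\bar H \cap R) \to \bar H R/R$ is an isomorphism of topological groups onto a closed subgroup of the connected Lie group $\bar L/R$. Now $\bar H$ is totally disconnected (being $H/H^\circ$), and quotients of totally disconnected locally compact groups by closed normal subgroups remain totally disconnected: by van Dantzig's theorem, images of small compact open subgroups of $\bar H$ provide a basis of compact open subgroups at the identity in the quotient. On the other hand, a closed totally disconnected subgroup of a real Lie group is discrete --- by Cartan's closed subgroup theorem it is a Lie subgroup, and its identity component, being a connected Lie subgroup contained in a totally disconnected group, must be trivial. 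Combining these two observations shows that $\bar H R/R$ is discrete, i.e.\ $\bar H \cap R$ is open in $\bar H$.

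Setting $K := \Pi^{-1}(\bar H \cap R)$ now yields the required subgroup: $K$ is closed and normal in $G$, satisfies $H^\circ \leq K \leq H$, has $K/H^\circ = \bar H \cap R$ compact, and $H/K \simeq \bar H/(\bar H \cap R)$ discrete. The ``in particular'' claim follows at once: since $H^\circ$ is connected and lies in $K$, we have $H^\circ \leq K^\circ$, so $K/K^\circ$ is a quotient of the compact group $K/H^\circ$, hence compact.

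The main technical input is the Gleason--Yamabe theorem together with the structural fact that closed totally disconnected subgroups of real Lie groups are discrete; the subtlest bookkeeping point is verifying that $\bar H \cap R$ is genuinely $G$-normal (not merely $L$-normal), which follows from tracing through that $R$ is characteristic in $\bar L$ while both $\bar L$ and $\bar H$ are normal in $G/H^\circ$. Beyond these ingredients, the argument is routine.
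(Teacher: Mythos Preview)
Your proof is correct and follows essentially the same route as the paper's: both apply the Gleason--Yamabe theorem to (a quotient of) the connected group $L$, obtain a compact radical $R$ with Lie quotient, and then take $K$ to be the part of $H$ lying over $R$. The only cosmetic difference is that you quotient by $H^\circ$ first and then argue that the totally disconnected group $\bar H R/R$ is discrete in the Lie group $\bar L/R$, whereas the paper works directly in $L$, sets $K:=H^\circ R\cap H$ with $R=\Rad{\mc{LE}}(L)$, and uses that $(HR/R)^\circ=H^\circ R/R$; these are two phrasings of the same fact.
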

\begin{proof}
Let $R:=\Rad{\mc{LE}}(L)$. The group $R$ is a compact normal subgroup of $G$ and is such that $L/R$ is a Lie group via Theorem~\ref{thm:yamabe_radical}. The group $HR/R$ is a closed subgroup of the Lie group $L/R$, hence $HR/R$ is a Lie group. Additionally, the connected component of $HR/R$ equals $H^{\circ}R/R$. The group $K:=H^{\circ}R\cap H$ satisfies the lemma.
\end{proof}

The basic structural properties of Lie groups stated in the proposition below are classical and will be used without further comment.

\begin{prop}\label{prop:basic_Lie_facts}\
\begin{enumerate}[(1)]
\item The only connected abelian Lie groups are groups of the form $\Rb^a \times \Tb^b$ for $a,b\geq 0$ where $\Tb := \Rb/\Zb$ is the circle group. 

\item The factors in the closed derived series of a connected solvable Lie group are themselves connected abelian Lie groups.

\item A Lie group $L$ has a largest connected solvable normal subgroup, called the \defbold{solvable radical} of $L$.  

\item A connected Lie group with trivial solvable radical is \defbold{semisimple}.  A semisimple Lie group $L$ has discrete center, and $L/\Z(L)$ is a finite direct product of abstractly simple groups.

\item Every closed subgroup of a Lie group is a Lie group.

\item The dimension of a Lie group $L$ regarded as a real manifold, denoted by $\dim_{\mb{R}}(L)$, is additive with respect to extensions: if $\dim_{\mb{R}}(L)=n$ and $K$ is a closed normal subgroup of dimension $k$, then $\dim_{\mb{R}}(L/K)=n-k$.
\end{enumerate}
\end{prop}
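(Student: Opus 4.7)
Since all six assertions are standard results from the classical theory of Lie groups, my plan is simply to verify each by citing (or briefly recalling) the appropriate classical theorem, rather than producing a new argument. The statement is explicitly labelled as background to be used without further comment, so no originality is required; the goal is to satisfy myself that each item is indeed classical and that the correct reference is being invoked.

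For (1), the Lie algebra $\mf{g}$ of a connected abelian Lie group $G$ is abelian, and so the exponential map $\exp:\mf{g}\to G$ is a surjective continuous homomorphism whose kernel is a discrete subgroup of $\mf{g}\cong \Rb^n$; classifying such lattices gives $G\cong \Rb^a\times \Tb^b$. For (2), the commutator subgroup of a connected topological group is connected (being generated by a connected set containing the identity), hence so is its closure; applied iteratively, each term of the closed derived series is a connected closed normal subgroup, and the quotients are connected abelian Lie groups to which (1) applies. For (3), one takes the closure of the subgroup generated by all connected solvable closed normal subgroups and notes that the sum of solvable ideals of $\mf{g}$ has derived length bounded by $\dim_{\Rb}(\mf{g})$, so the closure remains solvable. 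For (4), apply Cartan's criterion: $\mf{l}$ decomposes uniquely as a direct sum of simple ideals, each of which integrates to an abstractly simple connected Lie group, and the quotient of $L$ by its center is then a direct product of these; the center is discrete because the kernel of the adjoint representation is exactly $\Z(L)$ and the adjoint representation of a semisimple Lie group is locally faithful. For (5) this is Cartan's closed subgroup theorem, and for (6) the quotient map $L\to L/K$ is a smooth principal $K$-bundle, which gives $\dim_{\Rb}(L)=\dim_{\Rb}(K)+\dim_{\Rb}(L/K)$.

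The deepest component is (4), which rests on the substantial structure theory of semisimple Lie algebras: Cartan's criterion for semisimplicity, Weyl's complete reducibility theorem, and the decomposition of a semisimple Lie algebra into simple ideals. I would not reproduce any of this machinery but instead cite a standard text such as Hochschild or Knapp; the remark preceding the proposition indicates that the authors intend the same.
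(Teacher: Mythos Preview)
Your proposal is correct and, in fact, exceeds what the paper does: the paper gives no proof whatsoever of this proposition, introducing it explicitly as ``classical'' material ``to be used without further comment.'' Your brief sketches for each item are accurate and well-targeted, so there is nothing to compare beyond noting that you have supplied justifications where the authors simply cite tradition.
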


We shall need one further observation about abelian Lie groups.

\begin{lem}\label{connected_abelian_approximation}
Let $A$ be a connected abelian Lie group.  If $A = \cgrp{\mc{D}}$ where $\mc{D}$ is a directed family of closed subgroups of $A$,  then some $D \in \mc{D}$ is cocompact in $A$.
\end{lem}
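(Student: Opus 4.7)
The plan is to reduce to the case $A = \Rb^a$ by quotienting out the maximal compact subgroup, and then argue by a maximum-dimension span argument.

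By Proposition~\ref{prop:basic_Lie_facts}(1), write $A \cong \Rb^a \times \Tb^b$ and let $\pi \colon A \to \Rb^a$ be the projection with kernel $T \cong \Tb^b$. Because $\ker(\pi)$ is compact, $\pi$ is proper and hence a closed map, so $\pi(D)$ is a closed subgroup of $\Rb^a$ for every $D \in \mc{D}$. The family $\pi(\mc{D})$ is then a directed family of closed subgroups of $\Rb^a$, and since $\pi$ is continuous, surjective, and closed,
\[
\overline{\bigcup \pi(\mc{D})} \;=\; \pi\bigl(\,\overline{\bigcup \mc{D}}\,\bigr) \;=\; \pi(A) \;=\; \Rb^a.
\]
I would also verify that $D \in \mc{D}$ is cocompact in $A$ if and only if $\pi(D)$ is cocompact in $\Rb^a$: given a compact $K \subseteq \Rb^a$ with $\pi(D) + K = \Rb^a$, any compact lift $K' \subseteq A$ of $K$ satisfies $D + K' + T = A$, and $K' + T$ is compact; conversely, $A/D$ surjects continuously onto $\Rb^a/\pi(D)$. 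So it suffices to prove the statement when $A = \Rb^a$.

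Assume now $A = \Rb^a$. For each $D \in \mc{D}$, set $V(D) := \mathrm{span}_{\Rb}(D)$, a vector subspace of $\Rb^a$. The family $\{V(D)\}_{D \in \mc{D}}$ is directed by inclusion, and since $\dim_{\Rb} V(D) \le a$, there exists $D_0 \in \mc{D}$ at which this dimension is maximal. For any $D \in \mc{D}$ with $D \supseteq D_0$, the inclusion $V(D) \supseteq V(D_0)$ combined with maximality forces $V(D) = V(D_0)$. The subfamily $\{D \in \mc{D} : D \supseteq D_0\}$ is cofinal in $\mc{D}$ by directedness, hence
\[
\Rb^a \;=\; \overline{\bigcup \mc{D}} \;=\; \overline{\bigcup_{D \supseteq D_0} D} \;\subseteq\; \overline{V(D_0)} \;=\; V(D_0),
\]
using that finite-dimensional subspaces of $\Rb^a$ are closed. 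Thus $D_0$ spans $\Rb^a$ as a real vector space.

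To conclude, I would invoke the standard structure theorem for closed subgroups of $\Rb^a$: any such subgroup has the form $U \oplus L$ with $U$ a vector subspace and $L$ a discrete subgroup lying in a complementary subspace, and it spans $\Rb^a$ precisely when $U + \mathrm{span}_{\Rb}(L) = \Rb^a$, in which case $\Rb^a / (U \oplus L)$ is a torus, hence compact. Applying this to $D_0$ yields that $D_0$ is cocompact, and unwinding the reduction shows the corresponding member of $\mc{D}$ is cocompact in $A$. The one subtlety to be careful about is the properness of $\pi$, which is what guarantees both that $\pi$ sends closed sets to closed sets and that it commutes with closures; this is essential to transport the density hypothesis from $A$ to $\Rb^a$. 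The dimension-maximization step itself is routine once this reduction is in place.
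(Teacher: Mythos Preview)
Your proof is correct and follows essentially the same approach as the paper: reduce to $A=\Rb^a$ by quotienting out the compact torus factor, then use the $\Rb$-linear span of the members of $\mc{D}$ together with a dimension-maximality argument. The paper's proof is a two-line sketch of precisely this strategy, so your version simply fills in the details (properness of the projection, the cofinality argument, and the structure of closed subgroups of $\Rb^a$) that the paper leaves implicit.
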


\begin{proof}
Write $A$ as $A = \Rb^a \times \Tb^b$ for some non-negative integers $a$ and $b$.  Since $\Tb^b$ is compact, we can pass to the quotient $A/\Tb^b$ and assume that $A = \Rb^a$.  The conclusion now follows by considering the $\Rb$-linear span of $D$ for each $D \in \mc{D}$.
\end{proof}

\subsection{Cayley--Abels graphs}
Cayley--Abels graphs play an essential role in the present work. Our discussion of Cayley--Abels graphs is somewhat more technical than usual. This additional complication is necessary to ensure the degree behaves well under quotients. 

A \defbold{graph} $\Gamma = (V,E,o,r)$ consists of a vertex set $V = V\Gamma$, a directed edge set $E = E\Gamma$, a map $o:E \rightarrow V$ assigning to each edge an \textbf{initial vertex}, and a bijection $r: E \rightarrow E$, denoted by $e \mapsto \ol{e}$ and called \textbf{edge reversal}, such that $r^2 = \mathrm{id}$. 

The \textbf{terminal vertex} of an edge is $t(e) := o(\ol{e})$. A \defbold{loop} is an edge $e$ such that $o(e) = t(e)$.  For $e$  a loop, we allow both $\ol{e} = e$ and $\ol{e} \not= e$ as possibilities.  The \defbold{degree} of a vertex $v \in V$ is $\deg(v):=|o\inv (v)|$, and the graph is \defbold{locally finite} if every vertex has finite degree.  The \defbold{degree} of the graph is defined to be 
\[
\deg(\Gamma) := \sup_{v \in V\Gamma} \deg(v).
\]
The graph is \defbold{simple} if the map $E \rightarrow V \times V$ defined by $ e \mapsto (o(e),t(e))$ is injective and no edge is a loop. 

An \defbold{automorphism} of a graph is a pair of permutations $\alpha_V: V \rightarrow V$ and $\alpha_E: E \rightarrow E$ that respect initial vertices and edge reversal: $\alpha_V(o(e))=o(\alpha_E(e))$ and $\ol{\alpha_E(e)}=\alpha_E(\ol{e})$. For simple graphs, automorphisms are just permutations of $V$ that respect the edge relation in $V\times V$.

For $G$ a group acting on a graph $\Gamma$ and $v\in V\Gamma$, the orbit of $v$ under $G$ is denoted by $Gv$. We denote the orbit of an edge $e\in E\Gamma$ under $G$ by $Ge$. The action of $G$ gives a \defbold{quotient graph} $\Gamma/G$ as follows: the vertex set $V_G$ is the set of $G$-orbits on $V$ and the edge set $E_G$ is the set of $G$-orbits on $E$. The origin map $\tilde{o}:E_G\rightarrow E_G$ is defined by $\tilde{o}(Ge):=Go(e)$; this is well-defined since graph automorphisms send initial vertices to initial vertices. The reversal $\tilde{r}:E_G\rightarrow E_G$ is given by $Ge\mapsto G\ol{e}$; this map is also well-defined. We will abuse notation and write $o$ and $r$ for $\tilde{o}$ and $\tilde{r}$. 

We stress an important feature of quotient graphs: If $N$ is a normal subgroup of $G$, then $\Gamma/N$ is naturally equipped with an action of $G$ with kernel containing $N$. The action of $G$ on $\Gamma/N$ therefore factors through $G/N$.

\begin{lem}\label{lem:quotient:basic}Let $G$ be a group acting on a graph $\Gamma$ with $N$ a normal subgroup of $G$.
\begin{enumerate}[(1)]
\item If $\deg(\Gamma)$ is finite, then $\deg(\Gamma/N) \le \deg(\Gamma)$, with equality if and only if there exists a vertex $v \in V$ of maximal degree such that the elements of $o\inv (v)$ all lie in distinct $N$-orbits.
\item For $v \in V$, the vertex stabilizer in $G$ of $Nv$ is $NG_{(v)}$.
\end{enumerate}
\end{lem}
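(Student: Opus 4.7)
The plan is to handle part (2) first by a direct orbit computation, and then use this understanding to analyze the degree bound in (1). For (2), since $N$ is normal in $G$, the action of $G$ on $\Gamma$ descends to an action on $\Gamma/N$ via $g \cdot Nv = N(gv)$. An element $g \in G$ stabilizes $Nv$ iff $gv \in Nv$, which means $gv = nv$ for some $n \in N$, i.e., $n\inv g \in G_{(v)}$, giving $g \in NG_{(v)}$. Conversely, $N$ fixes $Nv$ setwise and $G_{(v)}$ fixes $v$ (hence fixes $Nv$), so $NG_{(v)} \subseteq G_{(Nv)}$.

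For (1), the key observation is that the edges of $\Gamma/N$ with initial vertex $Nv$ are precisely the $N$-orbits of edges $e \in E\Gamma$ with $o(e) \in Nv$. Given such an orbit, applying an appropriate element of $N$ to a representative moves its initial vertex to $v$ itself, so the map $\phi : o\inv(v) \to o\inv(Nv)$ defined by $e \mapsto Ne$ is surjective. If $\phi(e_1) = \phi(e_2)$, then $ne_1 = e_2$ for some $n \in N$; applying $o$ gives $nv = v$, which forces $n \in N \cap G_{(v)}$. Thus the fibers of $\phi$ are exactly the orbits of $N \cap G_{(v)}$ acting on $o\inv(v)$, yielding $\deg_{\Gamma/N}(Nv) = |(N \cap G_{(v)}) \backslash o\inv(v)| \le \deg_\Gamma(v) \le \deg(\Gamma)$. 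Taking the supremum over all vertices gives $\deg(\Gamma/N) \le \deg(\Gamma)$.

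For the equality condition, since $\deg(\Gamma)$ is finite and vertex degrees are non-negative integers, the supremum $\deg(\Gamma)$ is attained at some vertex. Hence $\deg(\Gamma/N) = \deg(\Gamma)$ holds iff some $v \in V$ has $\deg_\Gamma(v) = \deg(\Gamma)$ and $\deg_{\Gamma/N}(Nv) = \deg_\Gamma(v)$; by the fiber analysis above, the second equality is equivalent to $N \cap G_{(v)}$ acting trivially on $o\inv(v)$, i.e., the elements of $o\inv(v)$ lying in pairwise distinct $N$-orbits. No real obstacle arises here: edge reversal plays no role in the argument since the degree depends only on the initial-vertex map $o$, and the whole proof reduces to bookkeeping with the canonical surjection from edges at $v$ to edges at $Nv$.
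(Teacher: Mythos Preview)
Your proof is correct and follows essentially the same approach as the paper: for (1) you both construct the surjection $o^{-1}(v)\to o^{-1}(Nv)$, $e\mapsto Ne$, by translating within the $N$-orbit, and for (2) you both compute the stabilizer directly (the paper phrases it via blocks of imprimitivity and orbit--stabilizer, you via the equivalent coset calculation). Your explicit identification of the fibers of $\phi$ with the $N\cap G_{(v)}$-orbits on $o^{-1}(v)$ is a nice touch not spelled out in the paper, but the arguments are otherwise the same.
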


\begin{proof}
$(1)$ Take $v \in V\Gamma$ and let $Ne$ be an edge of $\Gamma/N$ such that $o(Ne) = Nv$.  There then exists $v' \in Nv$ and $e' \in Ne$ such that $o(e') = v'$, and hence $o(ge') = v$ where $g \in N$ is such that $gv' = v$.  In other words, all edges of $\Gamma/N$ starting at $Nv$ are represented by edges of $\Gamma$ starting at $v$.  Hence $\deg(Nv) \le \deg(v)$, and $\deg(Nv) = \deg(v)$ if and only if every edge in $o\inv (v)$ is mapped to a distinct edge of $\Gamma/N$.  Since $v \in V\Gamma$ was arbitrary, the conclusions for the degree of $\Gamma/N$ are clear.

$(2)$ Let $H$ be the vertex stabilizer of $Nv$ in $G$; that is, $H$ is the setwise stabilizer of $Nv$. That $N$ is normal ensures $Nv$ is a block of imprimitivity for the action of $G$ on $V\Gamma$. We thus deduce that $G_{(v)} \le H$, so $G_{(v)} = H_{(v)}$.  Since $N$ is transitive on $Nv$ and $N \le H$, it follows that $NG_{(v)} = H$.
\end{proof}

\begin{defn} 
For $G$ a \tdlc group, a \defbold{Cayley--Abels} graph for $G$ is a connected graph of finite degree on which $G$ acts vertex-transitively such that the vertex stabilizers are open and compact.  A Cayley--Abels graph for a locally compact group $G$ is a Cayley--Abels graph for the \tdlc group $G/G^\circ$. That is to say, a Cayley--Abels graph for a locally compact group $G$ is a locally finite connected graph on which $G$ acts vertex-transitively and such that the vertex stabilizers are open and connected-by-compact.
\end{defn}

The following proposition is a standard result; see for example \cite[Proposition~2.E.9]{CdH14}. 

\begin{prop}\label{prop:factor} 
Let $G$ be a locally compact group. The group $G$ has a Cayley--Abels graph if and only if $G$ is compactly generated.  Moreover, if $G$ is compactly generated, then for every compact open subgroup $U/G^\circ$ of $G/G^\circ$, there exists a Cayley--Abels graph $\Gamma$ for $G$ such that $U$ is a vertex stabilizer.
\end{prop}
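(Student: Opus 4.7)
My plan is to reduce to the totally disconnected case and then build the graph as a Schreier-type graph on the coset space $G/U$. Since a Cayley-Abels graph for $G$ is by definition one for $G/G^\circ$, and compact generation passes between $G$ and $G/G^\circ$ (the ``only if" direction is automatic, since quotients of compactly generated groups are compactly generated; for the ``if" direction, a compact identity neighborhood $V \subseteq G$ together with a lift of a compact generating set for $G/G^\circ$ generates an open subgroup of $G$, which contains $G^\circ$ and surjects onto $G/G^\circ$, hence equals $G$), I may assume throughout that $G$ is \tdlc. The ``moreover" clause then reduces to realizing a given compact open subgroup $U \le G$ as a vertex stabilizer.

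For the sufficient direction and the ``moreover" clause, suppose $G$ is a compactly generated \tdlc group and let $U$ be any compact open subgroup. I would choose a compact symmetric generating set for $G$ and enlarge it to a compact symmetric set $K$ satisfying $U \subseteq K = UKU$; compactness is preserved because $UKU$ is a finite union of $U$-double cosets. Define the graph $\Gamma$ by taking $V\Gamma := G/U$ and joining distinct cosets $aU$ and $bU$ by a pair of mutually reverse edges whenever $a^{-1}b \in K$. Symmetry of $K$ makes adjacency symmetric, and the condition $UKU = K$ makes it well-defined on cosets. The group $G$ acts on $\Gamma$ by left translation, vertex-transitively, with stabilizer of the coset $U$ equal to $U$, and the degree $|K/U|-1$ is finite. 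Connectedness is automatic: for any $g = k_1\cdots k_n$ with $k_i \in K$, the sequence $U, k_1U, k_1k_2U, \ldots, gU$ is a path in $\Gamma$.

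For the converse, suppose $G$ acts on a Cayley-Abels graph $\Gamma$ and let $v_0 \in V\Gamma$ have stabilizer $U$. The vertex $v_0$ has finitely many neighbors $w_1, \ldots, w_d$; for each $i$ pick $g_i \in G$ with $g_iv_0 = w_i$ and set $H := \langle U, g_1, \ldots, g_d\rangle$. Since $H$ acts by graph automorphisms, a quick induction on the graph distance from $v_0$ gives $Hv_0 = V\Gamma$: if $v = hv_0 \in Hv_0$ and $w$ is a neighbor of $v$, then $h^{-1}w$ is a neighbor of $v_0$, hence equals some $g_jv_0$, so $w = hg_jv_0 \in Hv_0$. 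Combined with transitivity of $G$ and the inclusion $U \subseteq H$, this forces $G = H$, so $U \cup \{g_1, \ldots, g_d\}$ is a compact generating set.

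The main obstacle is modest: choosing $K$ with $U \subseteq K = UKU$ without losing compactness (handled by routine coset-covering, since $U$ is open), and checking that the construction fits the paper's edge-with-reversal formalism (trivial here, as the resulting graph is simple and loopless). Everything else is formal manipulation; the argument above is essentially \cite[Proposition~2.E.9]{CdH14}.
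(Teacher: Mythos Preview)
Your proof is correct and is essentially the standard argument. Note that the paper does not actually prove this proposition: it states the result as ``a standard result'' and simply refers to \cite[Proposition~2.E.9]{CdH14}, the same source you invoke. Your Schreier-graph construction on $G/U$ with a $U$-bi-invariant compact symmetric generating set, together with the elementary converse via connectivity and the orbit--stabilizer relation $G = H\cdot G_{(v_0)} = H$, is exactly the argument found there, so there is nothing to compare.
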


The vertices of a Cayley--Abels graph have the same degree, since $G$ acts vertex-transitively by graph automorphisms. This leads to an invariant for compactly generated groups.

\begin{defn}
If $G$ is a compactly generated locally compact group, the \defbold{degree} $\deg(G)$ of $G$ is the smallest degree of a Cayley--Abels graph for $G$.  
\end{defn}

We see that $\deg(G)=0$ if and only if $G$ is connected-by-compact, hence we infer the following:
\begin{obs} For $G$ a compactly generated locally compact group $\deg(G)+\dim^\infty_{\Rb}(G)=0$ 
if and only if $G$ is compact.
\end{obs}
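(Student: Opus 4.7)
The plan is to reduce the statement to the two facts the paper has already isolated just before the observation: that $\deg(G)=0$ iff $G$ is connected-by-compact, and $\dim^\infty_{\Rb}(G)=0$ iff $G$ is compact-by-(totally disconnected). Since both quantities are non-negative integers, their sum vanishes precisely when each vanishes, so the equivalence with compactness reduces to showing that a group which is simultaneously connected-by-compact and compact-by-totally-disconnected must itself be compact, and conversely.

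First I would handle the easy (``if'') direction: a compact group $G$ admits the one-vertex graph as a Cayley-Abels graph (the vertex stabilizer is $G$ itself, which is open and connected-by-compact), giving $\deg(G)=0$; moreover $G^{\circ}$ is compact, so $\Rad{\mc{LE}}(G^{\circ})=G^{\circ}$ and the defining quotient $G^{\circ}/\Rad{\mc{LE}}(G^{\circ})$ is trivial, whence $\dim^\infty_{\Rb}(G)=0$.

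For the (``only if'') direction, assume both invariants vanish. From $\deg(G)=0$ the group $G/G^{\circ}$ is compact. To exploit $\dim^\infty_{\Rb}(G)=0$, I would apply Theorem~\ref{thm:yamabe_radical} to the connected group $G^{\circ}$ (whose component quotient is trivial, hence compact): this gives that $G^{\circ}/\Rad{\mc{LE}}(G^{\circ})$ is a connected Lie group. Since its real dimension is $0$ by hypothesis, this connected Lie group is trivial, so $G^{\circ}=\Rad{\mc{LE}}(G^{\circ})$, which is compact. Combining a compact $G^{\circ}$ with a compact quotient $G/G^{\circ}$ forces $G$ itself to be compact, finishing the argument.

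There is no real obstacle here; the only subtlety worth flagging is that one must apply Gleason--Yamabe to $G^{\circ}$ rather than to $G$ to conclude that the ``zero-dimensional connected Lie quotient'' is actually trivial, since a priori $\dim^\infty_{\Rb}(G)=0$ only controls the dimension and not the number of components. Connectedness of $G^{\circ}$ removes this ambiguity.
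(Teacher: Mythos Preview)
Your proposal is correct and matches the paper's reasoning: the paper states this as an immediate inference from the two preceding characterizations ($\deg(G)=0$ iff $G$ is connected-by-compact; $\dim^\infty_{\Rb}(G)=0$ iff $G$ is compact-by-(totally disconnected)) and gives no further argument. Your write-up simply unpacks that inference explicitly, and the only extra detail you add---applying Gleason--Yamabe to $G^\circ$ to see that a zero-dimensional connected Lie quotient is trivial---is exactly what underlies the paper's statement that $\dim^\infty_{\Rb}(G)=0$ forces $G^\circ$ to be compact.
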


Essential to the work at hand are the groups which act on (Cayley--Abels) graphs like discrete groups.
\begin{defn}
Given a group $G$ acting on a graph $\Gamma$, we say that $G$ acts \defbold{freely modulo kernel} on $\Gamma$ if the vertex stabilizer $G_{(v)}$ acts trivially on both the vertices and the edges of $\Gamma$ for all $v \in V$.
\end{defn}

\begin{prop}\label{prop:degree_of_quotient}
Let $G$ be a compactly generated locally compact group, $N$ be a closed normal subgroup of $G$, and $\Gamma$ be a connected graph of finite degree on which $G$ acts vertex-transitively.
\begin{enumerate}[(1)]
\item If $\Gamma$ is a Cayley--Abels graph for $G$, then $\Gamma/N$ is a Cayley--Abels graph for $G/N$. 
\item We have $\deg(\Gamma/N) \le \deg(\Gamma)$, with equality if and only if $N$ acts freely modulo kernel on $\Gamma$.
\end{enumerate}
\end{prop}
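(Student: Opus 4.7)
The plan is to verify each defining property of a Cayley-Abels graph for part (1), and to combine Lemma~\ref{lem:quotient:basic} with a connectedness argument for part (2).

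For (1), vertex-transitivity of $G/N$ on $\Gamma/N$ is immediate since the $G$-action on $\Gamma/N$ has kernel containing $N$, and connectedness is inherited because any walk in $\Gamma$ descends to one in $\Gamma/N$. Finiteness of the degree is exactly Lemma~\ref{lem:quotient:basic}(1). The substantive verification is that vertex stabilizers in $G/N$ are open and connected-by-compact. By Lemma~\ref{lem:quotient:basic}(2), the $G/N$-stabilizer of $Nv$ is $NG_{(v)}/N$; this is open since $NG_{(v)}$ is a union of left translates of the open subgroup $G_{(v)}$. To see that it is connected-by-compact, I use that it is a continuous homomorphic image of $G_{(v)}$: the image of the connected group $G_{(v)}^\circ$ lies in the identity component of the image, so the quotient $(NG_{(v)}/N)/(NG_{(v)}/N)^\circ$ receives a continuous surjection from the compact group $G_{(v)}/G_{(v)}^\circ$ and is therefore compact.

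For (2), the inequality and its equality criterion are both given by Lemma~\ref{lem:quotient:basic}(1). Since $G$ acts vertex-transitively, every vertex has maximal degree, so equality is equivalent to the existence of a vertex $v$ for which $N_{(v)}$ fixes every edge in $o^{-1}(v)$; by normality of $N$ in $G$ and transitivity of $G$ on $V\Gamma$, this holds at one vertex if and only if it holds at every vertex. The remaining task is to upgrade this to the statement that each $N_{(v)}$ acts trivially on all of $\Gamma$. For this, I would observe that if $e$ is an edge from $v$ to $w$, then $N_{(v)}$ fixing $e$ forces $N_{(v)} \le N_{(w)}$; applying the same observation to $\bar e$ yields the reverse containment. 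Hence adjacent vertices have the same $N$-stabilizer, and connectedness of $\Gamma$ propagates this equality to all vertices. The common subgroup $K$ is the pointwise kernel of $N$ on $V\Gamma$, and since $K = N_{(v)}$ fixes every edge at $v$ for every $v$, it fixes every edge of $\Gamma$; this gives the forward direction, and the converse is immediate.

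The main (modest) obstacle is the final propagation step in (2), which is precisely where connectedness of $\Gamma$ is essential. Apart from that, the argument is a direct verification resting on Lemma~\ref{lem:quotient:basic} and the general fact that continuous images of connected-by-compact groups are connected-by-compact.
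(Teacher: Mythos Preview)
Your proposal is correct and follows essentially the same approach as the paper's proof: both parts rest on Lemma~\ref{lem:quotient:basic}, and in part~(2) you carry out the same propagation argument along edges using connectedness, vertex-transitivity, and normality of $N$. Your treatment of part~(1) is in fact more detailed than the paper's, which simply asserts that the stabilizer condition ``follows from Lemma~\ref{lem:quotient:basic}(2)''; your explicit verification that $NG_{(v)}/N$ is open and connected-by-compact (as a continuous image of $G_{(v)}$) fills in what the paper leaves implicit.
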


\begin{proof}
$(1)$ The graph $\Gamma/N$ is connected, and $G$ acts vertex-transitively on $\Gamma/N$. Lemma~\ref{lem:quotient:basic}(1) ensures that $\deg(\Gamma/N)$ is also finite.  The fact that the vertex stabilizers are connected-by-compact and open in $G/N$ follows from Lemma~\ref{lem:quotient:basic}(2).

$(2)$ Fix $v \in V$.  Lemma~\ref{lem:quotient:basic} ensures $\deg(\Gamma/N) \le \deg(\Gamma)$ with equality if and only if the elements of $o\inv (v)$ all lie in distinct $N$-orbits.  We thus deduce the first claim of $(2)$, and for the second, it suffices to show the elements of $o\inv (v)$ all lie in distinct $N$-orbits if and only if $N$ acts freely modulo kernel on $\Gamma$.

Suppose $N$ acts freely modulo kernel on $\Gamma$. For an edge $e$ of $\Gamma$, if $o(e)$ is fixed by $g \in N$, then $ge = e$.  The elements of $o\inv (v)$ thus all lie in distinct $N$-orbits.

Conversely, suppose the elements of $o\inv (v)$ all lie in distinct $N$-orbits. Any element of $N$ that fixes $v$ must also fix $o\inv (v)$ pointwise.  Each $g \in N_{(v)}$ then fixes $t(e)$, so $g$ fixes all the neighbors of $v$. We thus conclude $N_{(v)} \le N_{(w)}$ where $w\in V\Gamma$ is adjacent to $v$. 

As $G$ acts vertex-transitively on $\Gamma$ and $N$ is normal in $\Gamma$, the choice of $v$ is not important, so in fact, $N_{(v')} \le N_{(w')}$ for $(v',w')$ any pair of adjacent vertices of $\Gamma$.  That $\Gamma$ is connected now implies all vertex stabilizers of $N$ acting on $\Gamma$ are equal. Moreover, they fix every edge of $\Gamma$, since every edge lies in $o\inv (w)$ for some $w \in V\Gamma$.  The group $N$ therefore acts freely modulo kernel on $\Gamma$.
\end{proof}

\section{Finiteness properties of the lattice of closed normal subgroups}\label{sec:finitness_prop}
We here establish a finiteness property of the lattice of closed normal subgroups. As an immediate consequence, we obtain the existence of certain quotients with an interesting minimality condition.

\subsection{Directed and filtering families of normal subgroups}
Claim $(1)$ of the next lemma is more or less a restatement of \cite[Proposition 2.5]{CM11}. We give a proof for completeness.

\begin{lem}\label{lem:degree:finiteness}
Let $G$ be a compactly generated \tdlc group and $\Gamma$ be a Cayley--Abels graph for $G$.
\begin{enumerate}[(1)]
\item Let $\mc{F}$ be a filtering family of closed normal subgroups of $G$ and set $M := \bigcap \mc{F}$.  Then there exists $N \in \mc{F}$ such that $\deg(\Gamma/N) = \deg(\Gamma/M)$.

\item Let $\mc{D}$ be a directed family of closed normal subgroups of $G$ and set $M := \cgrp{\mc{D}}$.  Then there exists $N \in \mc{D}$ such that $\deg(\Gamma/N) = \deg(\Gamma/M)$.
\end{enumerate}
\end{lem}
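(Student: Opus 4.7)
The plan is to fix a vertex $v \in V\Gamma$ and reduce the computation of $\deg(\Gamma/N)$ to counting $N_{(v)}$-orbits on the finite set $o^{-1}(v)$. Since $G$ acts vertex-transitively on $\Gamma$, the quotient $G/N$ acts vertex-transitively on $\Gamma/N$, so all vertices of $\Gamma/N$ share the same degree. Using Lemma~\ref{lem:quotient:basic}, the edges of $\Gamma/N$ starting at $Nv$ are in bijection with the $N_{(v)}$-orbits on $o^{-1}(v)$: every edge of $\Gamma$ starting in the orbit $Nv$ is an $N$-translate of an edge starting at $v$, and two edges in $o^{-1}(v)$ are $N$-equivalent precisely when related by an element of $N_{(v)}$. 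Hence $\deg(\Gamma/N) = |N_{(v)} \backslash o^{-1}(v)|$, and the problem reduces to finding $N \in \mc{F}$ (resp.\ $\mc{D}$) whose orbit partition on the finite set $o^{-1}(v)$ matches the $M_{(v)}$-orbit partition.

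For part (1), the partitions refine as $N$ shrinks within $\mc{F}$, so I seek an $N$ that already achieves the $M_{(v)}$-refinement. For each pair $(e,e') \in o^{-1}(v)^2$ that is \emph{not} in the same $M_{(v)}$-orbit, I want $N(e,e') \in \mc{F}$ in which they remain separated. The set $X_N := \{h \in N_{(v)} : he = e'\}$ is a closed subset of the compact group $G_{(v)}$, because the stabilizer of any edge in $o^{-1}(v)$ is clopen in $G_{(v)}$. The family $\{X_N\}_{N \in \mc{F}}$ is itself filtering, and $\bigcap_{N \in \mc{F}} X_N = M_{(v)} \cap \{h : he = e'\} = \emptyset$ by hypothesis on the pair. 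Compactness of $G_{(v)}$ then forces $X_N = \emptyset$ for some $N \in \mc{F}$. Since $o^{-1}(v)$ is finite, only finitely many pairs require separation, and a common $N \in \mc{F}$ below all the chosen $N(e,e')$ exists by the filtering property.

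For part (2), the partitions coarsen as $N$ grows within $\mc{D}$. For each pair $(e,e')$ lying in the same $M_{(v)}$-orbit, fix $g \in M_{(v)}$ with $ge = e'$. The set $\{h \in G : he = e'\}$ is a coset of the open edge-stabilizer of $e$, hence open, and it lies automatically in $G_{(v)}$ since both edges start at $v$. Density of $\grp{\bigcup \mc{D}}$ in $M = \cgrp{\mc{D}}$ then yields some $g' \in \grp{\bigcup \mc{D}}$ with $g'e = e'$. As $g'$ is a finite product of elements of $\bigcup \mc{D}$, the directed property places $g'$ in $N'_{(v)}$ for some $N' \in \mc{D}$, identifying $e$ and $e'$ in that quotient. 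Dominating the finitely many such $N'$ by a single $N \in \mc{D}$ (again by directedness) produces an $N$ whose orbit partition on $o^{-1}(v)$ coincides with that of $M_{(v)}$.

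The main subtlety is the correct topological justification in each case: closedness of the $X_N$ combined with compactness of $G_{(v)}$ for (1), and openness of edge-stabilizers combined with density of $\grp{\bigcup \mc{D}}$ in $M$ for (2). Once the problem is reduced to finite orbit partitions on $o^{-1}(v)$, the result follows from these two dual compactness/density patterns.
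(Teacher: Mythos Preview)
Your proof is correct and follows essentially the same strategy as the paper: reduce $\deg(\Gamma/N)$ to the orbit structure of $N_{(v)}$ on the finite set $o^{-1}(v)$, then use compactness of $G_{(v)}$ for (1) and openness of edge-stabilizers together with density of $\bigcup\mc{D}$ in $M$ for (2). The only organizational difference is that the paper packages the argument via the order-preserving map $\alpha\colon N \mapsto \text{image of }N_{(v)}\text{ in }\Sym(o^{-1}(v))$ and works with the minimum/maximum of $\alpha(\mc{F})$ or $\alpha(\mc{D})$ in that finite poset, whereas you work directly with the orbit partitions pair by pair; both routes rest on the same finiteness and the same topological ingredients.
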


\begin{proof}
Fix $v \in V\Gamma$. For $N \in \Norm(G)$, the value $\deg(\Gamma/N)$ is determined by the number of orbits of the action of $N_{(v)}$ on the set $X$ of edges issuing from $v$.  Defining $\alpha(N)$ to be the subgroup of $\Sym(X)$ induced by $N_{(v)}$ on $X$, we see that $\deg(\Gamma/N)=\deg(\Gamma/M)$ if $\alpha(N)=\alpha(M)$. The assignment $N \mapsto \alpha(N)$ is also order-preserving. For a filtering or directed family $\mc{N} \subseteq \Norm(G)$, the family $\alpha(\mc{N}):= \{\alpha(N) \mid N \in \mc{N}\}$ is then a filtering or directed family of subgroups of $\Sym(X)$.  That $\Sym(X)$ is a finite group ensures $\alpha(\mc{N})$ is a finite family, so it admits a minimum or maximum, according to whether $\mc{N}$ is filtering or directed.

Claim $(2)$ is now immediate. The directed family $\alpha(\mc{D})$ admits a maximal element $\alpha(N)$. It then follows that $\alpha(N)=\alpha(M)$, so $\deg(\Gamma/N)=\deg(\Gamma/M)$.

For $(1)$, an additional compactness argument is required. If $G$ acts freely modulo kernel on $\Gamma$, then $N\in \mc{F}$ and $M$ also act as such. The desired result then follows since $\deg(\Gamma/N)=\deg(\Gamma)=\deg(\Gamma/M)$. Let us assume that $G$ does not act freely modulo kernel, so $G_{(v)}$ acts non-trivially on $\Gamma$ for any $v\in V\Gamma$. 

Take $\alpha(N) \in \alpha(\mc{F})$ to be the minimum.  Given $r \in \alpha(N)$, let $Y$ be the set of elements of $G_{(v)}$ that \textit{do not} induce the permutation $r$ on $X$. If $r\neq 1$, then plainly $Y\neq G_{(v)}$. If $r=1$, then $Y\neq G_{(v)}$ since $G_{(v)}$ acts non-trivially on $\Gamma$. The set $Y$ is a proper open subset of $G_{(v)}$, and thus $G_{(v)} \setminus Y$ is a non-empty compact set. 

Letting $\mc{K}$ be a finite subset of $\mc{F}$, the group $K := \bigcap_{F \in \mc{K}}F$ contains some element of $\mc{F}$, so $\alpha(K) \ge \alpha(N)$. In particular, $K_{(v)} \not\subseteq Y$. The intersection
\[
\bigcap_{F \in \mc{K}}(F_{(v)} \cap (G_{(v)} \setminus Y))
\]
is therefore non-empty.  Compactness now implies that
\[
M_{(v)} \cap (G_{(v)} \setminus Y) = \bigcap_{F \in \mc{F}}(F_{(v)} \cap (G_{(v)} \setminus Y)) \not= \emptyset;
\]
that is, some element of $M_{(v)}$ induces the permutation $r$ on $X$. Since $r \in \alpha(N)$ is arbitrary, we conclude that $\alpha(M) = \alpha(N)$, and so $\deg(\Gamma/N) = \deg(\Gamma/M)$.
\end{proof}

The conclusion of claim $(1)$ in Lemma~\ref{lem:degree:finiteness} implies that the factor $N/M$ is ``discrete" from the point of view of the Cayley--Abels graph.

\begin{lem}\label{lem:ess_discrete_char}
Let $G$ be a compactly generated \tdlc group with $N$ a closed normal subgroup of $G$.  If there is a Cayley--Abels graph $\Gamma$ for $G$ such that $\deg(\Gamma/N) = \deg(\Gamma)$, then there exists a compact normal subgroup $L$ of $G$ acting trivially on $\Gamma$ such that $L$ is an open subgroup of $N$.
\end{lem}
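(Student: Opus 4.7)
The plan is to apply Proposition~\ref{prop:degree_of_quotient} to identify the desired $L$ explicitly as a point stabilizer in $N$. Since $\deg(\Gamma/N) = \deg(\Gamma)$, part~(2) of that proposition yields that $N$ acts freely modulo kernel on $\Gamma$; equivalently, for every vertex $v \in V\Gamma$, the stabilizer $N_{(v)}$ acts trivially on both the vertex set and the edge set of $\Gamma$.

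I would fix any vertex $v \in V\Gamma$ and set $L := N_{(v)}$. Since $N_{(v)} = N \cap G_{(v)}$ and in a Cayley-Abels graph for a \tdlc group the vertex stabilizers $G_{(v)}$ are compact and open in $G$, the subgroup $L$ is automatically compact and open in $N$. The freely-modulo-kernel conclusion from the previous paragraph says exactly that $L$ acts trivially on $\Gamma$.

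It remains to verify that $L$ is normal in $G$, which I would do by observing that $L$ coincides with the kernel $K$ of the action $N \acts \Gamma$. One inclusion is trivial: any element of $K$ fixes $v$, so lies in $N_{(v)} = L$. The reverse inclusion is precisely the statement that $L$ acts trivially on $\Gamma$, which we have just established. Once $L$ is identified as the kernel of the $G$-equivariant action $N \acts \Gamma$ restricted to the normal subgroup $N$, normality in $G$ is immediate: for $g \in L$ and $h \in G$, the element $hgh^{-1}$ lies in $N$ and, for any $w \in V\Gamma$, satisfies $hgh^{-1}(w) = h(h^{-1}w) = w$, with the analogous check on edges.

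The only substantive step is the translation of the hypothesis $\deg(\Gamma/N)=\deg(\Gamma)$ into the freely-modulo-kernel property of $N$, which is already encapsulated in Proposition~\ref{prop:degree_of_quotient}(2); the remainder is bookkeeping with compact open subgroups and the definition of a Cayley-Abels graph for a \tdlc group.
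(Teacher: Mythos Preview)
Your argument is correct and follows essentially the same approach as the paper: both invoke Proposition~\ref{prop:degree_of_quotient}(2) to obtain that $N$ acts freely modulo kernel, and both take $L$ to be $N$ intersected with the kernel of the $G$-action on $\Gamma$. The only cosmetic difference is that the paper reaches this intersection by first describing the kernel as the core of the pointwise star-stabiliser $U$, whereas you identify it directly as $N_{(v)}$; since $N_{(v)}=N\cap K$ under the freely-modulo-kernel hypothesis, the two constructions coincide.
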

\begin{proof}
In view of Proposition~\ref{prop:degree_of_quotient}, $N$ acts freely modulo kernel on $\Gamma$.  For $U$ the pointwise stabilizer of the star $o^{-1}(v)$ for some vertex $v$, the subgroup $U$ is a compact open subgroup of $G$, and its core $K$ is the kernel of the action of $G$ on $\Gamma$.  Since $N$ acts freely modulo kernel, we deduce that $N \cap U \leq K$. The group $L:=K\cap N$ now satisfies the lemma.
\end{proof}

Combining our results on Cayley--Abels graphs with the Gleason--Yamabe Theorem, we obtain a result that applies to compactly generated locally compact groups without dependence on a choice of Cayley--Abels graph.

\begin{thm}\label{thm:essential_finiteness}Let $G$ be a compactly generated locally compact group.
\begin{enumerate}[(1)]
\item If $\mc{F}$ is a filtering family of closed normal subgroups of $G$, then there exists $N \in \mc{F}$ and a closed normal subgroup $K$ of $G$ such that $\bigcap \mc{F} \le K \le N$, $K/\bigcap \mc{F}$ is compact, and $N/K$ is discrete.

\item If $\mc{D}$ is a directed family of closed normal subgroups of $G$, then there exists $N \in \mc{D}$ and a closed normal subgroup $K$ of $G$ such that $N \le K \le \cgrp{\mc{D}}$, $K/N$ is compact, and $\cgrp{\mc{D}}/K$ is discrete.
\end{enumerate}
\end{thm}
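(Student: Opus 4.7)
The plan for part~(1) is to first normalize so that $\bigcap \mc{F} = \triv$ by quotienting out $M := \bigcap \mc{F}$; the filtering family descends to $G/M$, compact generation is preserved, and the conclusion pulls back without loss. It then suffices to produce $N_1 \in \mc{F}$ together with a compact normal subgroup $K$ of $G$ satisfying $K \le N_1$ and $N_1/K$ discrete (equivalently, $K$ open in $N_1$). In the special case when $G$ is totally disconnected, this follows immediately by chaining the two preceding lemmas: Lemma~\ref{lem:degree:finiteness}(1) applied to a Cayley-Abels graph $\Gamma$ produces $N_1 \in \mc{F}$ with $\deg(\Gamma/N_1) = \deg(\Gamma)$, and Lemma~\ref{lem:ess_discrete_char} then yields the required compact normal $K$ open in $N_1$.

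For general locally compact $G$, the idea is to eliminate the obstruction coming from $G^\circ$ by factoring out a suitable compact normal subgroup. Fix a Cayley-Abels graph $\Gamma$ for $G$; the action of $G$ on $\Gamma$ factors through $G/G^\circ$, and its kernel $K_0$ is a closed normal subgroup of $G$ that lies in a vertex stabilizer, hence is connected-by-compact. Theorem~\ref{thm:yamabe_radical} applied to $K_0$ (whose component group is compact) provides a compact $K_0$-characteristic, and so $G$-normal, subgroup $R := \Rad{\mc{LE}}(K_0)$ with $K_0/R$ Lie. Because $R$ is compact, passing to $G/R$ is harmless for the conclusion: any compact normal subgroup of $G/R$ open in an image of $N_1 \in \mc{F}$ pulls back to a compact normal subgroup of $G$ open in $N_1 R$, from which intersection with $N_1$ recovers what is wanted. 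After this reduction we may assume $K_0$ itself is Lie.

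With $K_0$ Lie, the proof combines the tdlc case applied to the compactly generated tdlc quotient $G/K_0$ with Lie-theoretic arguments on $K_0$. Applying the tdlc case of~(1) to the image of $\mc{F}$ in $G/K_0$ picks out an element $N_1 \in \mc{F}$ whose image is discrete over a compact piece; on the Lie side, finite-dimensionality forces the filtering family $\{(N \cap K_0)^\circ : N \in \mc{F}\}$ of connected closed Lie subgroups of $K_0$ to stabilize at some element of $\mc{F}$, and since $\bigcap \mc{F} = \triv$, the stabilized value must be trivial, making $N_1 \cap K_0$ discrete in the Lie group $K_0$. Combining the tdlc decomposition on $G/K_0$ with the discreteness of $N_1 \cap K_0$ via Lemma~\ref{lem:sgrp_con} (to reconcile compact and discrete pieces across the extension $K_0 \le G$) produces the required compact $K \le N_1$. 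Part~(2) is handled dually: one replaces Lemma~\ref{lem:degree:finiteness}(1) with~(2) and substitutes Lemma~\ref{connected_abelian_approximation} for the dimension argument, so that the connected abelian Lie quotients arising when $\cgrp{\mc{D}}$ has nontrivial connected component can be approximated by elements of $\mc{D}$.

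The principal technical obstacle will be the bookkeeping as the filtering (respectively directed) family is pushed through the quotient maps by $R$ and by $K_0$: the operations $\bigcap$ and $\cgrp{\cdot}$ do not in general commute with these quotient maps, so one must verify at each step that the resulting intersections (or joins) in the quotients remain compatible with the original $\bigcap \mc{F}$ (respectively $\cgrp{\mc{D}}$), and that the compact piece built at each stage can be absorbed into the final $K$ without violating the containment $K \le N_1$ (respectively $N \le K \le \cgrp{\mc{D}}$). The tdlc case is essentially immediate from the two preceding lemmas; the real work lies in reconciling the Cayley-Abels and Gleason-Yamabe reductions.
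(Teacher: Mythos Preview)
Your plan for part~(1) is structurally close to the paper's: reduce to $\bigcap\mc{F}=\triv$, introduce the action kernel $E$ (your $K_0$) and its compact radical $R$ with $E/R$ Lie, then combine a Cayley--Abels degree argument with a Lie-dimension argument. The paper, however, does \emph{not} quotient by $R$ and then invoke the tdlc case on $G/K_0$ as a black box. It works in $G$ throughout, simultaneously choosing $N\in\mc{F}$ to optimise $a_N:=\deg(\Gamma/\ol{NG^\circ})$ and $b_N:=\dim_{\Rb}((N\cap E)R/R)$, sets $K:=N\cap E\cap R$, and then runs the decisive compactness argument inside $G$: if $((N\cap E)R/R)^\circ$ were nontrivial, a compact neighbourhood $T\subseteq E$ with $T=TR$ would meet every $F\le N$ outside a proper open $U$, forcing $\bigcap\mc{F}\cap(T\setminus U)\ne\emptyset$. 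Your route via $G/K_0$ runs into a genuine difficulty: once $R$ has been factored out, $K_0=E/R$ is a (typically noncompact) Lie group, and there is no reason for $\bigcap_N\ol{NK_0}=K_0$; the tdlc case applied to $G/K_0$ therefore yields only information relative to this uncontrolled intersection, not a compact-open piece of $N_1$ itself. (By contrast, your reduction modulo the compact group $R$ is fine: a short argument on cosets shows $\bigcap_N NR=R$ when $\bigcap_N N=\triv$.) The fix is exactly what the paper does---do not separate into a tdlc quotient step, but use Lemma~\ref{lem:degree:finiteness} directly on $\Gamma$ to get $N$ acting freely modulo kernel (so $N\cap E$ is open in $N$), and handle the Lie part by the compactness argument above rather than by passing to $G/R$.

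For part~(2) your sketch is too thin, and the argument is \emph{not} a formal dual of~(1). After using Lemma~\ref{lem:degree:finiteness}(2) and passing to $G/N$ so that $M:=\cgrp{\mc{D}}$ acts freely modulo kernel, the paper minimises $\dim_{\Rb}(E/(N\cap E)R)$ over $\mc{D}$ to produce $S:=(N\cap E)R$, and then makes the key observation you omit: for every $D\ge N$ in $\mc{D}$ the group $(D\cap E)R/S$ is discrete, hence each of its elements has open centraliser, so $(LR/S)^\circ$ (where $L=M\cap E$) is centralised by $(G/S)^\circ$ and is therefore \emph{abelian}. Only with abelian-ness established can Lemma~\ref{connected_abelian_approximation} be applied to find $F\in\mc{D}$ cocompact in the connected piece. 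This centraliser step is the heart of~(2) and does not fall out of dualising the dimension argument in~(1).
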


\begin{proof}
$(1)$  The group $G/\bigcap \mc{F}$ is a compactly generated locally compact group, so we assume that $\bigcap \mc{F}=\triv$.  Fix a Cayley--Abels graph $\Gamma$ for $G$ and let $E$ be the kernel of the action of $G$ on $\Gamma$.  Since $G^\circ \le E$, we have $G^\circ = E^\circ$, and $E/G^\circ$ is compact, since $E/G^\circ$ is the core of a compact open subgroup of $G/G^\circ$.  Theorem~\ref{thm:yamabe_radical} thus ensures $R: = \Rad{\mc{LE}}(E)$ is compact and the quotient $E/R$ is a Lie group with finitely many connected components. Observe additionally that $R\normal G$.

For $N \in \Norm(G)$, set $a_N: = \deg(\Gamma/\ol{NG^{\circ}})$ and $b_N := \dim_{\Rb}((N \cap E)R/R)$.  Both $a_N$ and $b_N$ are natural numbers depending on $N$ in a monotone fashion: given $N,N' \in \Norm(G)$ such that $N \le N'$, then $a_N \ge a_{N'}$ and $b_N \le b_{N'}$. By Lemma~\ref{lem:degree:finiteness}, there exists $N \in \mc{F}$ such that $a_N=\deg(\Gamma/\ol{NG^{\circ}}) = \deg(\Gamma)$.  Since $\mc{F}$ is a filtering family, we can choose $N$ such that additionally $b_N$ is minimized across $N \in \mc{F}$.  Fix such an $N$. 

We argue claim $(1)$ holds for $N$ and $K:=N\cap E\cap R$. Since $R$ is compact, it suffices to show $N/K$ is discrete. Consider first $ N \cap E$. By Proposition~\ref{prop:degree_of_quotient}, $\ol{NG^{\circ}}$ acts freely modulo kernel on $\Gamma$, hence $N$ acts freely modulo kernel on $\Gamma$.  The group $N \cap E$ is thus a vertex stabilizer of the action of $N$ on $\Gamma$. In particular, $N \cap E$ is open in $N$.  It now suffices to show that $K$ is open in $N \cap E$.

Consider $((N \cap E)R/R)^\circ$ and suppose for contradiction that there is an infinite compact identity neighborhood $T/R$ of $((N \cap E)R/R)^{\circ}$; we assume that $TR=T$. Find $U \varsubsetneq T$ open containing $1$ with $U = UR$. For $F \in \mc{F}$ with $F \le N$, the minimality of $b_N$ implies $(F \cap E)R/R$ is a subgroup of $(N \cap E)R/R$ with the same dimension $b_N$. Consequently, $(F \cap E)R/R$ contains $((N \cap E)R/R)^\circ$, and
\[
T = FR \cap T = (F \cap T)R.
\] 
We infer that $(F \cap T) \not\subseteq U$, so $F$ intersects the non-empty compact set $T\setminus U$.  As $\mc{F}$ is a filtering family, it follows by compactness that $\bigcap\mc{F} \cap T$ intersects $T\setminus U$. However, this is absurd since $\bigcap\mc{F} \cap T=\triv$.  

All compact identity neighborhoods of $((N \cap E)R/R)^\circ$ are thus finite. It follows that a singleton set is open, so $((N \cap E)R/R)^\circ$  is discrete. The only discrete connected group is the trivial group, so  $((N \cap E)R/R)^\circ$ is in fact trivial. Since $(N \cap E)R/R$ is a Lie group, we conclude the group $(N \cap E)R/R\simeq (N \cap E)/K$ is discrete, verifying $(1)$ holds for $N$ and $K$.

\

$(2)$ Set $M:=\cgrp{\mc{D}}$ and let $\Gamma$ be a Cayley--Abels graph for $G$.  By Lemma~\ref{lem:degree:finiteness}, there exists $N \in \mc{D}$ such that $\deg(\Gamma/\ol{NG^{\circ}}) = \deg(\Gamma/\ol{MG^{\circ}})$. Moreover, $\Gamma/\ol{NG^{\circ}}=\Gamma/N$ is a Cayley--Abels graph for $G/N$ by Proposition~\ref{prop:degree_of_quotient}.  Passing to the quotient $G/N$ and replacing $\Gamma$ with $\Gamma/N$, we may assume that $\deg(\Gamma) = \deg(\Gamma/\ol{MG^{\circ}})$ and that $M$ acts freely modulo kernel on $\Gamma$.  

Let $E$ be the kernel of the action of $G$ on $\Gamma$ and let $L := M \cap E$. Our assumptions ensure that $L$ is open in $M$. We will now find an $F\in \mc{D}$ and a closed $J\normal G$ such that $J$ is an open subgroup of $L$ and that $JF/F$ is compact; this will prove $(2)$ with $K:=JF$ and $N:=F$.

As in $(1)$, the group $R: = \Rad{\mc{LE}}(E)$ is compact, and the quotient $E/R$ is a Lie group with finitely many connected components.  Let $N \in \mc{D}$ witness the minimum of the set
\[
\{\dim_{\Rb}(E/(N \cap E)R) \mid N \in \mc{D}\}
\]
and consider $S:=(N \cap E)R$.  For all $D \in \mc{D}$ such that $D \ge N$, the quotient $(D \cap E)R/S$ is discrete, by our choice of $S$.  Every element of $(D \cap E)R/S$ thereby has an open centralizer in $G$, and hence $(D \cap E)R/S$ is centralized by $(G/S)^\circ$. As $L=M\cap E$ is open in $M$,
\[
L = \ol{\bigcup_{D \in \mc{D}}(D \cap M\cap E)} = \ol{\bigcup_{D \in \mc{D}}(D \cap E)}.
\] 
The group $LR/S$ is thus centralized by $(G/S)^\circ$, and a fortiori, $(LR/S)^\circ$ is abelian.  

The group $E/S$ is a Lie group,  so $LR/S$ is also a Lie group. We infer that $(LR/S)^\circ$ is an open subgroup of $LR/S$. The component $(LR/S)^\circ$ thus contains a dense subgroup which is formed of a directed union of discrete subgroups of the form $(D \cap E)R/S \cap (LR/S)^\circ$. Since $(LR/S)^\circ$ is a connected abelian Lie group, Lemma~\ref{connected_abelian_approximation} ensures there is some $F \in \mc{D}$ such that $(F \cap E)R/S \cap (LR/S)^\circ$ is cocompact in $(LR/S)^\circ$. Fix $F\in\mc{D}$ with this property. 

There is a compact set $A\subseteq LR/S$ such that $(LR/S)^\circ \subseteq A(F\cap E)R/S$. Letting $\pi:LR/S\rightarrow LR/(F\cap E)R$ be the usual projection, $\pi((LR/S)^\circ )\leq \pi(A)$ which is compact.  Let $J\normal L$ be the preimage in $L$ of the connected component of $L/(F\cap E)(R\cap L) \simeq LR/(F \cap E)R$.   We see that $J/(F \cap E)(R \cap L)$ is compact, and since $R$ is compact, in fact $J/(F \cap E)$ is compact.

The subgroup $J$ is invariant under continuous automorphisms of $L$ that fix $(F\cap E)(R\cap L)$, so $J\normal G$. Additionally, $J$ is an open subgroup of $L$. Forming $JF$, the quotient $JF/F\simeq J/F\cap J$ is a quotient of the compact group $J/(F \cap E)$. We have thus found the desired subgroups, and $(2)$ follows.
\end{proof}

\subsection{Just-non-${P}$ quotients}
As an immediate application of Theorem~\ref{thm:essential_finiteness}, we show certain quotients exist.
\begin{defn}
For $P$ a property of groups, we say that a non-trivial locally compact group $G$ is \defbold{just-non-$P$} if every proper non-trivial quotient of $G$ has $P$, but $G$ itself does not have $P$.
\end{defn}

The properties $P$ we consider must enjoy the following permanence property: A property $P$ of locally compact groups is \defbold{closed under compact extensions} if for a compactly generated locally compact group $G$ with a compact normal subgroup $N$, the group $G$ has $P$ if and only if $G/N$ has $P$. Quasi-isometry invariant properties are closed under compact extensions; see \cite[Proposition 1.D.4]{CdH14}.

A compactly generated locally compact group $G$ is called \defbold{compactly presented} if there is a compact generating set $S$ such that $G$ has a presentation $\grp{S|R}$ where the relators $R$ have bounded length. Being compactly presented is a quasi-isometry invariant property of compactly generated locally compact groups; see \cite[Corollary 8.A.4]{CdH14}.   

\begin{thm}\label{thm:just-non-P}
Let $P$ be a property of locally compact groups.  If all groups with $P$ are compactly presented and $P$ is closed under compact extensions, then for any compactly generated locally compact group $G$, exactly one of the following holds:
\begin{enumerate}[(1)]
\item Every non-trivial quotient of $G$ (including $G$ itself) has $P$; or 
\item $G$ admits a quotient that is just-non-$P$.
\end{enumerate}
\end{thm}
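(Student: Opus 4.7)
The plan is to apply Zorn's lemma to the poset
\[
\mc{N} := \{N \trianglelefteq G \text{ closed} : N \lneq G \text{ and } G/N \text{ does not have } P\},
\]
ordered by inclusion. Case (1) holds precisely when $\mc{N} = \emptyset$; otherwise a maximal element $M \in \mc{N}$ yields a non-trivial quotient $G/M$ which lacks $P$ but whose every proper non-trivial quotient $G/M'$ (corresponding to $M \lneq M' \lneq G$, so that $M' \notin \mc{N}$) has $P$---making $G/M$ just-non-$P$ and placing $G$ in case (2). The two cases are mutually exclusive, so the substance of the proof lies in verifying the Zorn hypothesis: every chain $\mc{C} \subseteq \mc{N}$ admits an upper bound in $\mc{N}$.

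Given such a chain, I would set $M := \cgrp{\mc{C}}$ and apply Theorem~\ref{thm:essential_finiteness}(2) to the directed family $\mc{C}$ to obtain $N \in \mc{C}$ and a closed normal subgroup $K$ of $G$ with $N \leq K \leq M$, $K/N$ compact, and $M/K$ discrete. Since $G/N$ lacks $P$ and $P$ is closed under compact extensions, $G/K$ also lacks $P$. Passing to the cofinal sub-chain $\{D \in \mc{C} : D \geq N\}$---which does not change $\cgrp{\mc{C}} = M$---I may assume every $D \in \mc{C}$ contains $N$.

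The key step is to show $G/M$ lacks $P$. Suppose for contradiction $G/M \in P$; then $G/M$ is compactly presented. Here I invoke the standard fact (Cornulier--de la Harpe, \cite{CdH14}) that a compactly presented quotient of a compactly generated locally compact group has a compactly normally generated kernel: applied to $G/K \twoheadrightarrow G/M$, the discrete subgroup $M/K$ is the closed normal subgroup of $G/K$ generated by a compact---hence finite---set $\{f_1,\dots,f_n\}$. Because $M/K$ is discrete, the equality $M/K = \overline{\bigcup_{D \in \mc{C}} DK/K}$ collapses to $M/K = \bigcup_{D \in \mc{C}} DK/K$, so each $f_i$ lies in some $D_i K/K$; as $\mc{C}$ is totally ordered, all $f_i$ lie in $DK/K$ for the largest $D$ among the $D_i$. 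Since $DK/K$ is normal in $G/K$, it contains the normal closure of $\{f_1,\dots,f_n\}$, forcing $M = DK$. Then $M/D \cong K/(K \cap D)$ is a quotient of the compact group $K/N$ (using $N \leq D$), so $M/D$ is compact. A final application of closure under compact extensions to $G/D$ with compact normal subgroup $M/D$ yields $G/D \in P$, contradicting $D \in \mc{C} \subseteq \mc{N}$.

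Hence $G/M$ lacks $P$; in particular $M \lneq G$, using the tacit observation that under the hypotheses of the theorem the trivial group lies in $P$ (otherwise, by closure under compact extensions, no compact group does and the statement is essentially vacuous). Thus $M \in \mc{N}$, completing the verification of the Zorn hypothesis. The principal obstacle is the cited compactly-presented-quotient lemma; the chain-manipulation works only because $M/K$ is discrete, allowing a finite normal generating set to be absorbed into a single member of the chain.
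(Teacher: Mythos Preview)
Your proof is correct and follows essentially the same route as the paper: define the poset of closed proper normal subgroups whose quotient lacks $P$, verify the Zorn hypothesis by applying Theorem~\ref{thm:essential_finiteness}(2) to a chain, and use compact presentability of $G/M$ (via \cite[Proposition~8.A.10]{CdH14}) to absorb a finite normal generating set of the discrete factor $M/K$ into a single chain member $D$, yielding the contradictory compact extension $G/D \to G/M$. The only cosmetic differences are that you invoke the essential finiteness theorem before, rather than after, assuming $G/M \in P$, and you explicitly flag the issue of ensuring $M \lneq G$ (which the paper leaves implicit); your intermediate observation that $G/K$ lacks $P$ is correct but unused.
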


\begin{proof}
Let $\mc{F}$ be the collection of proper closed normal subgroups $N$ of $G$ such that $G/N$ fails to have property $P$. If $\mc{F}=\emptyset$, then $G$ satisfies (1), and we are done. Assuming $\mc{F}\neq \emptyset$, we claim increasing chains in $\mc{F}$ have upper bounds. Let $(N_{\alpha})_{\alpha\in I}$ be an $\subseteq$-increasing chain and put $L:=\ol{\bigcup_{\alpha \in I}N_{\alpha}}$. Suppose for contradiction $G/L$ has property $P$; in particular, $G/L$ is compactly presented.

Appealing to Theorem~\ref{thm:essential_finiteness}, we may find $\gamma\in I$ and a closed $K\normal G$ such that $N_{\gamma}\leq K\leq L$ with $L/K$ discrete and $K/N_{\gamma}$ compact. The group $G/L$ is a quotient of $G/K$ with kernel $L/K$. Moreover, since $G/L$ is compactly presented, \cite[Proposition 8.A.10]{CdH14} implies the discrete group $L/K$ is finitely normally generated as a subgroup of $G/K$.

Let $X\subseteq L/K$ be a finite normal generating set. Since $\bigcup_{\alpha \in I}N_{\alpha}$ is dense in $L$ and $K$ is open in $L$, we may find $N_{\beta}$ for some $\beta>\gamma$ such that $N_{\beta}K/K$ contains $X$. The normality of $N_{\beta}$ implies that indeed $N_{\beta}K=L$. The group $G/L$ is thus a quotient of $G/N_{\beta}$ with kernel $KN_{\beta}/N_{\beta}$. The kernel $KN_{\beta}/N_{\beta}$ is compact, so $G/N_{\beta}$ is a compact extension of $G/L$. Hence, $G/N_{\beta}$ has property $P$, an absurdity. 

We conclude that increasing chains in $\mc{F}$ have upper bounds. Applying Zorn's lemma, we may find $N\in \mc{F}$ maximal. The maximality of $N$ ensures every proper non-trivial quotient of $G/N$ has property $P$, verifying $(2)$.
\end{proof}

Let $G$ be a compactly generated locally compact group with compact symmetric generating set $X$. The group $G$ has \textbf{polynomial growth} if there are constants $C,k>0$ such that $\mu(X^n)\leq Cn^k$ for all $n\geq 1$, where $\mu$ is a Haar measure. Having polynomial growth is a quasi-isometry invariant property, and compactly generated locally compact groups with polynomial growth are \textit{necessarily} compactly presented, \cite[Proposition 8.A.25]{CdH14}. 

The following corollary is now immediate from Theorem~\ref{thm:just-non-P}.

\begin{cor} Let $G$ be a compactly generated locally compact group.
\begin{enumerate}[(1)]
\item If some quotient of $G$ is not compactly presented, then $G$ admits a quotient that is just-non-(compactly presented).
\item If $G$ does not have polynomial growth, then $G$ admits a quotient that is just-not-(of polynomial growth).
\end{enumerate}
\end{cor}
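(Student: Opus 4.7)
The plan is to derive both statements as direct instantiations of Theorem~\ref{thm:just-non-P}, choosing $P$ appropriately in each case and verifying the two hypotheses: (a) every group with $P$ is compactly presented, and (b) $P$ is closed under compact extensions. Both claims then reduce to excluding alternative (1) of that theorem using the failure hypothesis on $G$.

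For part (1), I would take $P$ to be ``compactly presented.'' Hypothesis (a) is tautological. For hypothesis (b), compact presentation is a quasi-isometry invariant of compactly generated locally compact groups by \cite[Corollary 8.A.5]{CdH14}, so it is in particular preserved under compact extensions (and quotients by compact normal subgroups). Theorem~\ref{thm:just-non-P} thus applies. Since by hypothesis some quotient of $G$ fails to be compactly presented, alternative (1) of the theorem — every non-trivial quotient has $P$ — does not hold, so alternative (2) does: $G$ admits a just-non-(compactly presented) quotient.

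For part (2), I would take $P$ to be ``of polynomial growth.'' Hypothesis (a) is provided by \cite[Proposition 8.A.23]{CdH14}, which asserts that compactly generated locally compact groups of polynomial growth are compactly presented. Hypothesis (b) follows from the fact, noted just before the corollary, that polynomial growth is quasi-isometry invariant. Theorem~\ref{thm:just-non-P} applies again. Since $G$ itself does not have polynomial growth, alternative (1) fails, and alternative (2) furnishes a just-not-(of polynomial growth) quotient of $G$.

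There is essentially no obstacle to overcome beyond citing the two permanence properties from \cite{CdH14}; the corollary is a bookkeeping application of Theorem~\ref{thm:just-non-P}. The only subtle point worth stating explicitly is that in part (2) the ``failure'' of alternative (1) uses the fact that the theorem's alternative (1) requires $G$ itself to have $P$, which is contradicted by the hypothesis that $G$ lacks polynomial growth.
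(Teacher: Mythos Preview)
Your proposal is correct and matches the paper's approach exactly: the paper states the corollary is immediate from Theorem~\ref{thm:just-non-P}, having already recorded (just before the corollary) that compact presentation and polynomial growth are quasi-isometry invariants and that polynomial growth implies compact presentation via \cite{CdH14}. Your write-up simply makes explicit the instantiation of $P$ and the exclusion of alternative~(1), which is precisely the intended reading.
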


We remark that groups in which all proper quotients are compactly presented satisfy a stronger version of Theorem~\ref{thm:essential_finiteness}(2).

\begin{prop}Let $G$ be a compactly generated locally compact group such that every proper quotient of $G$ is compactly presented. If $\mc{D}$ is a directed family of closed normal subgroups of $G$, then there exists $N \in \mc{D}$ such that $\cgrp{\mc{D}}/N$ is compact.
\end{prop}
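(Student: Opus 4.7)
The plan is to eliminate the discrete factor appearing in Theorem~\ref{thm:essential_finiteness}(2) by using the compactly-presented-quotient hypothesis, following the same strategy as the proof of Theorem~\ref{thm:just-non-P}. Set $M:=\cgrp{\mc{D}}$ and apply Theorem~\ref{thm:essential_finiteness}(2) to obtain $N_0\in\mc{D}$ together with a closed normal subgroup $K$ of $G$ such that $N_0\leq K\leq M$, $K/N_0$ is compact, and $M/K$ is discrete. If $N_0=G$ then $M=N_0$ and $M/N_0$ is trivial, so assume that $N_0$ is a proper closed normal subgroup of $G$. Then $G/N_0$ is compactly presented by hypothesis.

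Because compact presentation is a quasi-isometry invariant and $K/N_0$ is a compact normal subgroup of $G/N_0$, the quotient $G/K$ is compactly presented. The quotient $G/M$ is also compactly presented: either $M=G$, in which case $G/M$ is trivial, or $G/M$ is a proper quotient of $G$. Applying \cite[Proposition~8.A.10]{CdH14} to the short exact sequence
\[
1 \longrightarrow M/K \longrightarrow G/K \longrightarrow G/M \longrightarrow 1
\]
produces a finite subset $X\subseteq M$ whose image in the discrete group $M/K$ normally generates $M/K$ as a subgroup of $G/K$.

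Since $M/K$ is discrete, $K$ is open in $M$, and the density of $\bigcup\mc{D}$ in $M$ guarantees that each coset $xK$ with $x\in X$ meets some $D_x\in\mc{D}$. By directedness one may choose $D\in\mc{D}$ containing $N_0$ and every $D_x$. Then $DK/K$ is a closed normal subgroup of $G/K$ that contains the image of $X$, and consequently $DK/K=M/K$, i.e.\ $DK=M$. The second isomorphism theorem then gives $M/D\simeq K/(K\cap D)$, which is a quotient of the compact group $K/N_0$ (since $N_0\leq K\cap D$), hence compact. Taking $N:=D$ finishes the proof.

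The main obstacle, and the sole point at which the hypothesis on $G$ enters, is the compact-presentation step: it is what allows \cite[Proposition~8.A.10]{CdH14} to upgrade the discrete factor $M/K$ from Theorem~\ref{thm:essential_finiteness}(2) to a finitely normally generated object, after which the density of $\bigcup\mc{D}$ in $M$ lets a single $D\in\mc{D}$ swallow finitely many coset representatives. Everything else is a direct adaptation of the final step in the proof of Theorem~\ref{thm:just-non-P}.
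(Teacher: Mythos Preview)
Your argument is essentially the paper's: apply Theorem~\ref{thm:essential_finiteness}(2), use \cite[Proposition~8.A.10]{CdH14} to see that the discrete factor $M/K$ is finitely normally generated in $G/K$, and then absorb a finite set of coset representatives into a single member of $\mc{D}$ by density and directedness.

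Two small points. First, the detour showing $G/K$ is compactly \emph{presented} is unnecessary: Proposition~8.A.10 only needs $G/K$ compactly generated, which is automatic. Second, your case analysis misreads ``proper quotient'': the hypothesis gives compact presentation of $G/N$ for $N\neq\triv$, not for $N\neq G$, so the sentence ``$N_0$ is proper, hence $G/N_0$ is compactly presented'' does not follow, and likewise ``$M\neq G$ implies $G/M$ is a proper quotient'' fails when $M=\triv$. The paper simply assumes $\cgrp{\mc{D}}\neq\triv$ at the outset (the conclusion being trivial otherwise); once you do the same, the unnecessary detour can be deleted and the argument goes through exactly as in the paper.
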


\begin{proof}
Without loss of generality $\cgrp{\mc{D}} \neq \triv$.  By Theorem~\ref{thm:essential_finiteness}(2), there exists  $M \in \mc{D}$ and a closed normal subgroup $K$ of $G$ such that $M \le K \le \cgrp{\mc{D}}$, $\cgrp{\mc{D}}/K$ is discrete, and $K/M$ is compact.  The group $G/\cgrp{\mc{D}}$ is compactly presented, so $\cgrp{\mc{D}}/K$ is normally generated in $G$ by a finite set. It now follows there exists $M\leq N \in \mc{D}$ such that $\cgrp{\mc{D}} = NK$.  In particular, $\cgrp{\mc{D}}/N$ is compact, as required.
\end{proof}

\section{Essentially chief series}\label{sec:chief}

\begin{defn}
An \defbold{essentially chief series} for a compactly generated locally compact group $G$ is a finite series
\[
\{1\} = G_0 \leq G_1 \leq \dots \leq G_n = G
\]
of closed normal subgroups such that each normal factor $G_{i+1}/G_i$ is either compact, discrete, or a topological chief factor of $G$.
\end{defn}

We now show that any compactly generated locally compact group admits an essentially chief series; more precisely, any finite normal series can be refined to an essentially chief series.

\subsection{Existence of essentially chief series} 
We begin with two technical results which prove the existence of essentially chief refinements of normal series, with bounds on the number of factors required. These lemmas deal with the connected case and totally disconnected case, respectively.

\begin{lem}\label{lem:chief_refinement:connected}
Suppose that $G$ is a compactly generated locally compact group, $H\leq L$ are closed normal subgroups of $G$, and $d := \dim_{\Rb}^{\infty}(L)-\dim_{\Rb}^{\infty}(H)$. If $L/H$ is connected-by-compact, then there is a series
\[
H = G_0 \leq G_1 \leq \dots \leq G_k = L
\]
of closed normal subgroups of $G$ with $k\leq 2d+1$ such that each factor $G_{i+1}/G_i$ is either compact, discrete, or a chief factor of $G$.  Additionally, at most $d$ factors are neither compact nor discrete.\end{lem}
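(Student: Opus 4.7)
My plan is to induct on $d = \dim_\Rb^\infty(L) - \dim_\Rb^\infty(H)$, using Theorem~\ref{thm:yamabe_radical} to reduce to a connected Lie group and then decomposing via the classical solvable--semisimple structure theory.

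For the base case $d = 0$, the connected-by-compact quotient $L/H$ has $\dim_\Rb^\infty(L/H) = 0$, so its Gleason--Yamabe Lie quotient is zero-dimensional; combined with compactness of the component group, $L/H$ is compact and I take $k = 1$ with the single compact factor $L/H$.

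For the inductive step ($d \geq 1$), I would apply Theorem~\ref{thm:yamabe_radical} to $L/H$ to obtain the compact characteristic subgroup $\Rad{\mc{LE}}(L/H)$; its preimage $R \leq L$ is closed and $G$-normal (because $\Rad{\mc{LE}}$ is characteristic in $L/H$), with $R/H$ compact (a first compact factor) and $L/R$ a Lie group of finitely many components. Taking $T \leq L$ to be the preimage of $(L/R)^{\circ}$ yields $T \normal G$ with $L/T$ finite (a compact factor at the top) and $T/R$ a connected Lie group of real dimension at most $d$, using superadditivity of $\dim_\Rb^\infty$ together with the compactness of $R/H$ and the openness of $T/R$ in $L/R$.

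The main work then lies in refining $R \leq T$ inside the connected Lie group $T/R$. I would invoke Proposition~\ref{prop:basic_Lie_facts}: the solvable radical $S/R$ of $T/R$ is characteristic, hence $G$-normal; the closed derived series of $S/R$ then decomposes it into connected abelian Lie factors of shape $\Rb^a \times \Tb^b$. The compact torus $\Tb^b$ piece is extracted as a single compact factor, and the Euclidean $\Rb^a$ piece is further split along a $G$-module Jordan--H\"older series into chief factors, each of Lie dimension at least $1$. On the semisimple quotient $T/S$, its discrete center provides a single discrete-or-compact factor, and the further quotient is a finite direct product of abstractly simple Lie groups permuted by $G$ (Proposition~\ref{prop:basic_Lie_facts}(4)); grouping these simple factors into $G$-orbits gives the remaining chief factors.

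The main obstacle is controlling the counting to get $k \leq 2d+1$ with at most $d$ non-compact factors. Each non-compact chief factor consumes at least one unit of $\dim_\Rb(T/R) \leq d$, so at most $d$ such factors appear. Non-compact \emph{discrete} factors, which can arise from the non-Noetherian nature of the lattice of $G$-invariant closed subgroups inside the abelian $\Rb^a$-part (e.g.\ unbounded chains of lattices), must be kept from proliferating beyond the dimensional budget; Lemma~\ref{connected_abelian_approximation} together with Theorem~\ref{thm:essential_finiteness} will be used here to consolidate infinite candidate chains of discrete subgroups into single discrete factors. The final bookkeeping then separates non-compact factors by at most one compact factor each, yielding total length at most $2d+1$.
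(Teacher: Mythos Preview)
Your overall shape---Gleason--Yamabe reduction to a connected Lie quotient, then structural decomposition---matches the paper. The paper, however, does not route through the solvable radical and the derived series. It instead takes directly a maximal $G$-invariant series $H=M_0<\dots<M_i=L$ whose factors $V_j$ are connected of positive dimension and admit no proper closed $G$-invariant subgroup of positive dimension; since the solvable radical and the closed derived subgroup are characteristic, each $V_j$ is forced to be either compact, $\Rb^n$ with $G$ acting $\Rb$-irreducibly, or non-compact semisimple with $G$ transitive on the simple factors. Each $V_j$ is then split into at most two pieces (discrete centre below a centreless chief factor in the semisimple case; lattice below a torus, or nothing, in the $\Rb^n$ case), which makes the bounds $k\le 2d$ and ``at most $d$ non-compact'' immediate.

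The genuine gap in your plan is the treatment of the irreducible $\Rb^n$ pieces. You first assert that the $G$-module Jordan--H\"older factors of $\Rb^a$ are chief factors of $G$; they need not be, since an irreducible $\Rb^n$ can still contain a $G$-invariant lattice. You then diagnose the obstruction as ``unbounded chains of lattices'' and propose to repair it with Lemma~\ref{connected_abelian_approximation} and Theorem~\ref{thm:essential_finiteness}. This is both the wrong diagnosis and the wrong tool: an essentially chief series only asks each factor to be compact, discrete, \emph{or} chief, so there is no need to refine inside a discrete lattice at all. The paper's one-line fix is that in an irreducible $\Rb^n$ every proper closed $G$-invariant subgroup is a lattice (its $\Rb$-span would otherwise be a proper invariant subspace), so either $\Rb^n$ is already a chief factor or one inserts a single invariant lattice to obtain one discrete factor below one compact torus factor. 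No appeal to Theorem~\ref{thm:essential_finiteness} is needed, and with this in hand the bound $2d+1$ follows from the observation that if the inner series already has length $2d$, its top factor is a compact torus and can be merged with the finite factor $L/L'$. (As a minor point, your ``induction on $d$'' is never actually invoked; the argument is a single direct decomposition.)
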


\begin{proof}
Let us first assume that $L/H$ is connected and $\Rad{\mc{LE}}(L/H) = \triv$; we will here prove the result with $k \le 2d$.  In this situation, $L/H$ is a Lie group and $d = \dim_{\Rb}(L/H) $.  There is then $i \le d$ and a series $H = M_0 < M_1 \dots < M_i = L$ of closed $G$-invariant subgroups such that each of the factors $M_j/M_{j-1} =: V_j$ is connected, has positive dimension, and has no proper closed $G$-invariant subgroup of positive dimension.  

For each $1 \le j \le i$, the factor $V_j$ is either compact, a non-compact semisimple Lie group such that $G$ permutes transitively the simple factors, or $\Rb^n$ such that $G$ acts irreducibly. If $V_j$ is compact, we do nothing. In the second case, we take $N_j$ such that $N_j/M_{j-1} = \Z(V_j)$. The factor $N_j/M_{j-1}$ is then discrete, and $M_j/N_j$ is a chief factor of $G$. For the last case, every proper closed $G$-invariant subgroup of $V_j$ is either trivial or a lattice.  If $G$ does not preserve a lattice, then $V_j$ is already a chief factor.  If $G$ preserves a lattice $N_j/M_{j-1}$, then $N_j/M_{j-1}$ is discrete, and $M_j/N_j$ is compact.

By including the $N_j$ terms as needed, we obtain a $G$-invariant series from $H$ to $L$ with at most $k  \le 2d$ factors such that each factor is compact, discrete, or a chief factor. Additionally, at most $d$ of the factors are neither compact nor discrete, since each $V_j$ contributes at most one such factor. Let us make a further observation for later use: If there are exactly $2d$ factors in the series, then each $V_j$ is one-dimensional, hence abelian, and divided into a discrete factor $N_j/M_{j-1}$ and a compact factor $M_j/N_j$.

For the general case, let $L'$ be the preimage of $(L/H)^\circ$ and let $H'$ be the preimage of $\Rad{\mc{LE}}(L'/H)$.  By our work above, there is a $G$-invariant series from $H'$ to $L'$ with at most $2d$ factors of the appropriate form. Since both $H'/H$ and $L/L'$ are compact, we immediately obtain the required $G$-invariant series from $H$ to $L$ with at most $l+2$ factors where $l\leq 2d$ such that at most $d$ factors are non-compact.  If $l = 2d$, the uppermost factor in the series from $H'$ to $L'$ can be combined with $L/L'$, so we find a series with $2d+1$ factors. We thus deduce that there is an essentially chief series from $H$ to $L$ with at most $2d+1$ factors such that at most $d$ factors are non-compact.
\end{proof}

\begin{lem}\label{lem:chief_refinement:tdlc}
Suppose that $G$ is a compactly generated locally compact group, $H\leq L$ are closed normal subgroups of $G$, and $\Gamma$ is a Cayley--Abels graph for $G$ such that $\deg(G)=\deg(\Gamma)$. If $G^{\circ}\leq H$, then there exists a series
\[
H=:C_0\leq K_0\leq D_0\leq \dots \leq C_n\leq K_n\leq  D_n=L
\]
of closed normal subgroups of $G$ with $n\leq \deg(\Gamma/H)-\deg(\Gamma/L)$ such that
\begin{enumerate}[(1)]
\item for $0\leq i\leq n$, $K_i/C_{i}$ is compact, and $D_i/K_i$ is discrete; and
\item for $1\leq i\leq n$, $C_{i}/D_{i-1}$ is a chief factor of $G$.
\end{enumerate}
\end{lem}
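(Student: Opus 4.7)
The plan is to proceed by induction on the non-negative integer $d := \deg(\Gamma/H) - \deg(\Gamma/L)$. For the base case $d = 0$, note that $G^\circ \le H$ forces $G/H$ to be a compactly generated \tdlc group, and Proposition~\ref{prop:degree_of_quotient}(1) shows $\Gamma/H$ is a Cayley-Abels graph for $G/H$. Lemma~\ref{lem:ess_discrete_char} applied to $G/H$ and its normal subgroup $L/H$ then produces a closed normal subgroup $K_0 \normal G$ with $H \le K_0 \le L$, $K_0/H$ compact, and $L/K_0$ discrete; the single triple $H = C_0 \le K_0 \le D_0 = L$ is the required series.

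For the inductive step $d > 0$, I would first locate a maximum ``essentially discrete'' extension of $H$ inside $L$. Consider
\[
\mc{F} := \{N \in \Norm(G) \mid H \le N \le L \text{ and } \deg(\Gamma/N) = \deg(\Gamma/H)\}.
\]
Plainly $H \in \mc{F}$, and for every chain $\mc{C} \subseteq \mc{F}$ ordered by inclusion, Lemma~\ref{lem:degree:finiteness}(2) applied to the directed family $\mc{C}$ yields some $N \in \mc{C}$ with $\deg(\Gamma/N) = \deg(\Gamma/\cgrp{\mc{C}})$, so $\cgrp{\mc{C}} \in \mc{F}$. Zorn's lemma then produces a maximal element $D_0 \in \mc{F}$. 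Since $\deg(\Gamma/D_0) = \deg(\Gamma/H)$, Lemma~\ref{lem:ess_discrete_char} applied to $G/H$ and $D_0/H$ provides $K_0 \normal G$ with $H \le K_0 \le D_0$, $K_0/H$ compact, and $D_0/K_0$ discrete.

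Next, I would produce the chief factor above $D_0$. Since $d > 0$, one has $D_0 < L$. Consider the non-empty poset
\[
\mc{P} := \{M \in \Norm(G) \mid D_0 < M \le L\},
\]
and take a maximal chain $\mc{C} \subseteq \mc{P}$ under inclusion. Set $C_1 := \bigcap \mc{C}$, which is a closed normal subgroup of $G$ with $D_0 \le C_1 \le L$. If $C_1 = D_0$, then Lemma~\ref{lem:degree:finiteness}(1) applied to the filtering family $\mc{C}$ provides $N \in \mc{C}$ with $\deg(\Gamma/N) = \deg(\Gamma/D_0) = \deg(\Gamma/H)$, placing $N \in \mc{F}$ strictly above $D_0$ and contradicting maximality of $D_0$. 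Hence $D_0 < C_1$, so $C_1 \in \mc{P}$ and $C_1 \in \mc{C}$ by maximality of $\mc{C}$. Any closed normal $M \normal G$ with $D_0 < M < C_1$ would satisfy $M \in \mc{P}$ and $\mc{C} \cup \{M\}$ would be a strictly larger chain in $\mc{P}$, forcing $M \in \mc{C}$ and hence $M \ge C_1$, a contradiction. Therefore $C_1/D_0$ is a chief factor of $G$.

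Finally, $G^\circ \le H \le C_1$ and $\deg(\Gamma/C_1) \le \deg(\Gamma/D_0) - 1 = \deg(\Gamma/H) - 1$, so the induction hypothesis applies to the pair $(C_1, L)$ with the same graph $\Gamma$, yielding a compatible series from $C_1$ to $L$ with at most $d - 1$ chief factors. Prepending the triple $(H, K_0, D_0)$ and the chief factor $C_1/D_0$ gives the desired series with at most $d$ chief factors. The main obstacle is the chief-factor construction of the previous paragraph: both directions of degree stabilization (Lemma~\ref{lem:degree:finiteness}, parts (1) and (2)) must be invoked together with the maximality of $D_0$ in $\mc{F}$ to prevent the descending chain $\mc{C}$ in $\mc{P}$ from collapsing onto $D_0$.
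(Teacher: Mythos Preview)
Your proof is correct and follows essentially the same strategy as the paper: both arguments induct on the degree drop, use Lemma~\ref{lem:degree:finiteness}(2) with Zorn's lemma to produce a degree-maximal $D_0$, invoke Lemma~\ref{lem:ess_discrete_char} for the compact-by-discrete splitting $C_0\le K_0\le D_0$, and then use Lemma~\ref{lem:degree:finiteness}(1) to obtain the chief factor above $D_0$. The only cosmetic difference is in that last step: the paper selects $C_{i+1}$ as a \emph{minimal} normal subgroup realizing the next achievable degree below $\deg(\Gamma/D_i)$, whereas you obtain $C_1$ as the intersection of a maximal chain in $\{M:\ D_0<M\le L\}$ and use maximality of $D_0$ to rule out $C_1=D_0$; both routes rest on the same filtering-family half of Lemma~\ref{lem:degree:finiteness}.
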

\begin{proof}
Set $k:=\deg(\Gamma/H)$ and $m:=\deg(\Gamma/L)$. By induction on $i\leq k-m$, we prove there exists a series of normal subgroups of $G$
\[
H=:C_0\leq K_0\leq D_0\leq \dots \leq C_i\leq K_i\leq  D_i\leq L
\]
such that claims $(1)$ and $(2)$ hold of all factors up to $i$ and that there is $i\leq j \leq k-m$ for which $D_i$ is maximal among normal subgroups of $G$ such that $\deg(\Gamma/D_i)=k-j$ and $D_i\leq L$. 

For $i=0$, it follows from Lemma~\ref{lem:degree:finiteness} and Zorn's lemma that there exists $D_0$ maximal such that $\deg(\Gamma/D_0)=k$ and $H\leq D_0\leq L$. The graph $\Gamma/H$ is a Cayley--Abels graph for $G/H$ with degree $k$, and 
\[
\deg((\Gamma/H)/(D_0/H))=\deg(\Gamma/D_0)=k.
\]  
Applying Lemma~\ref{lem:ess_discrete_char}, there is $K_0\normal G$ such that $H\leq K_0\leq D_0$ with $K_0/H$ compact, open, and normal in $D_0/H$. We conclude that $C_0=H$, $K_0$, and $D_0$ satisfy the inductive claim when $i=0$ with $j=0$.

Suppose we have built our sequence up to $i$. By the inductive hypothesis, there is $i\leq j \leq k-m$ such that $D_i$ is maximal with $\deg(\Gamma/D_i)=k-j$ and $D_i\leq L$. If $j=k-m$, then the maximality of $D_i$ implies $D_i=L$, and we stop. Else, let $j'>j$ be least such that there is $M\normal G$ with $\deg(\Gamma/M)=k-j'$ and $D_i\leq M \leq L$. We take $C_{i+1}\normal G$ to be minimal such that $\deg(\Gamma/C_{i+1})=k-j'$ and $D_i< C_{i+1}\leq L$; Lemma~\ref{lem:degree:finiteness} ensures such a subgroup exists.

Consider a closed $N\normal G$ with $D_i\leq N<C_{i+1}$. Putting $\deg(\Gamma/N)=l$, Proposition~\ref{prop:degree_of_quotient} implies $k-j'\leq l \leq k-j$, and the minimality of $C_{i+1}$ further implies $k-j'<l$. On the other hand, we chose $j'>j$ least such that there is $M\normal G$ with $\deg(\Gamma/M)=k-j'$ and $D_i\leq M\leq L$. Therefore, $l=k-j$. In view of the maximality of $D_i$, we deduce that $D_i=N$ and that $C_{i+1}/D_{i}$ is a chief factor.

Applying again Lemma~\ref{lem:degree:finiteness}, there is $D_{i+1}\normal G$ maximal such that 
\[
\deg(\Gamma/D_{i+1})=k-j'
\]
and $C_{i+1}\leq D_{i+1}\leq L$. Lemma~\ref{lem:ess_discrete_char} supplies $K_{i+1}\normal G$ such that $C_{i+1}\leq K_{i+1}\leq D_{i+1}$ with $K_{i+1}/C_{i+1}$ compact and open in $D_{i+1}/C_{i+1}$. This completes the induction.

Our procedure halts at some $n\leq k-m$. At this stage, $D_n=L$, verifying the theorem.
\end{proof}

We now use Lemmas~\ref{lem:chief_refinement:connected} and \ref{lem:chief_refinement:tdlc} to refine a normal series factor by factor to produce an essentially chief series.

\begin{thm}\label{thm:chief_series}
Let $G$ be a compactly generated locally compact group and let $(G_i)_{i=1}^{m-1}$ be a finite ascending sequence of closed normal subgroups of $G$.  Then there exists an essentially chief series for $G$
\[
\triv = K_0 \le K_1 \le \dots \le K_l = G,
\]
such that
\begin{enumerate}[(1)]
\item $\{G_1,\dots,G_{m-1}\}$ is a subset of $\{K_0,\dots,K_l\}$; and
\item if $G^\circ \in \{G_1,\dots,G_{m-1}\}$, then $l \le 2m + 2\dim^\infty_{\Rb}(G)+3\deg(G)$, and at most $\dim^\infty_{\Rb}(G)+\deg(G)$ of the factors $K_{i+1}/K_i$ are neither compact nor discrete.
\end{enumerate}
\end{thm}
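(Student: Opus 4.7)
My plan is to refine the given series factor by factor: for each $i$, apply Lemma~\ref{lem:chief_refinement:tdlc} to $G_{i}/G_{i-1}$ when $G^{\circ} \leq G_{i-1}$, and otherwise use Lemma~\ref{lem:sgrp_con} together with Lemma~\ref{lem:chief_refinement:connected}. I begin by fixing a Cayley-Abels graph $\Gamma$ for $G$ with $\deg(\Gamma) = \deg(G)$ and, if necessary, inserting $G^{\circ}$ into the given sequence, producing $\triv = G_{0} \leq G_{1} \leq \cdots \leq G_{m} = G$ with $G_{j} = G^{\circ}$ for some $j$.

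For each $i > j$, Lemma~\ref{lem:chief_refinement:tdlc} applied with $H = G_{i-1}$ and $L = G_{i}$ refines $G_{i}/G_{i-1}$ into at most $2 + 3 n_{i}$ essentially chief factors, where $n_{i} \leq \deg(\Gamma/G_{i-1}) - \deg(\Gamma/G_{i})$ bounds the number of chief (neither compact nor discrete) pieces. Summing over $i > j$ and telescoping the degrees gives a totally disconnected contribution of at most $2(m - j) + 3 \deg(G)$ factors, of which at most $\deg(G)$ are chief.

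For each $i \leq j$, the factor $G_{i}/G_{i-1}$ sits inside the connected quotient $G^{\circ}/G_{i-1}$. When $i = j$, the factor $G^{\circ}/G_{j-1}$ is itself connected, so Lemma~\ref{lem:chief_refinement:connected} refines it into at most $2 d_{j} + 1$ essentially chief factors. When $i < j$, the image of $G_{i}$ in $G/G_{i-1}$ is a proper closed normal subgroup of the connected group $G^{\circ}/G_{i-1}$, so Lemma~\ref{lem:sgrp_con} supplies a $G$-normal subgroup $Q_{i}$ with $G_{i-1} \leq Q_{i} \leq G_{i}$ such that $Q_{i}/G_{i-1}$ is connected-by-compact and $G_{i}/Q_{i}$ is discrete. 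Lemma~\ref{lem:chief_refinement:connected} applied to the pair $(H,L) = (G_{i-1}, Q_{i})$ then refines $Q_{i}/G_{i-1}$ into at most $2 d'_{i} + 1$ essentially chief factors, and appending the single discrete factor $G_{i}/Q_{i}$ yields at most $2 d'_{i} + 2$ factors for $G_{i}/G_{i-1}$. Summing via the superadditivity of $\dim^{\infty}_{\Rb}$ (using $\dim^{\infty}_{\Rb}(Q_{i}) \leq \dim^{\infty}_{\Rb}(G_{i})$ since $G_{i}/Q_{i}$ is discrete) gives a connected contribution of at most $2 \dim^{\infty}_{\Rb}(G) + 2j - 1$ factors, with at most $\dim^{\infty}_{\Rb}(G)$ chief.

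Combining both parts yields $l \leq 2m - 1 + 2\dim^{\infty}_{\Rb}(G) + 3\deg(G) \leq 2m + 2\dim^{\infty}_{\Rb}(G) + 3\deg(G)$ and at most $\dim^{\infty}_{\Rb}(G) + \deg(G)$ chief factors, as claimed. The main obstacle is the refinement step for $i < j$ when $G_{i}$ itself is not connected: the key idea is to apply Lemma~\ref{lem:sgrp_con} in $G/G_{i-1}$ to extract a $G$-normal $Q_{i}$ for which $Q_{i}/G_{i-1}$ satisfies the connected-by-compact hypothesis of Lemma~\ref{lem:chief_refinement:connected}, thereby absorbing the ``upper compact part'' of $G_{i}/G_{i-1}$ into the connected lemma's output and paying only a single additional discrete factor per connected boundary. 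Naively splitting $G_{i}/G_{i-1}$ into connected, compact, and discrete layers and invoking Lemma~\ref{lem:chief_refinement:connected} only on the connected layer would instead cost three extra factors per connected boundary, breaking the stated bound.
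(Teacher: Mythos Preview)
Your argument for part~(2) is correct and essentially identical to the paper's: both split the series at $G^\circ$, use Lemma~\ref{lem:sgrp_con} below $G^\circ$ to extract a $G$-normal connected-by-compact piece to which Lemma~\ref{lem:chief_refinement:connected} applies plus a single discrete remainder, apply Lemma~\ref{lem:chief_refinement:tdlc} factor-by-factor above $G^\circ$, and telescope the resulting degree and dimension differences. Your observation that the top connected factor $G^\circ/G_{j-1}$ is itself connected (so no Lemma~\ref{lem:sgrp_con} step and no extra discrete slot are needed there) buys you the marginally sharper constant $2m-1$, but otherwise the two proofs coincide, including the final commentary about why one must feed Lemma~\ref{lem:chief_refinement:connected} a connected-by-compact factor rather than a merely connected one.

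There is, however, a gap in your treatment of part~(1). You propose to ``insert $G^\circ$ into the given sequence,'' but $G^\circ$ need not be comparable to every $G_i$. For instance, take $G = \Rb \times \Zb$ with the single intermediate term $G_1 = \{0\}\times\Zb$: neither $G^\circ = \Rb\times\{0\} \le G_1$ nor $G_1 \le G^\circ$ holds, so there is no position in the chain into which $G^\circ$ can be inserted, and your dichotomy ``$i\le j$ versus $i>j$'' collapses. The paper handles part~(1) by a different refinement that avoids any global comparison with $G^\circ$: for each factor $G_{j+1}/G_j$ it inserts the preimage of $(G_{j+1}/G_j)^\circ$, applies Lemma~\ref{lem:chief_refinement:connected} to the resulting connected bottom piece, and Lemma~\ref{lem:chief_refinement:tdlc} to the totally disconnected top piece. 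Since the hypothesis of part~(2) already places $G^\circ$ among the $G_i$, your bound argument there is unaffected; only the unconditional existence claim~(1) requires this alternate factor-by-factor refinement.
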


\begin{proof} 
Let us extend the series by $G_0:=\{1\}$ and $G_m:=G$ obtaining the series
\[
\triv = :G_0 \le G_1 \le \dots \le G_{m-1} \le G_{m} := G.
\]

For each $j \in \{0,\dots,m-1\}$, we apply Lemma~\ref{lem:chief_refinement:connected} to $H := G_j$ and $L$ such that $L/G_j = (G_{j+1}/G_j)^\circ$ to refine the series. We then refine again by applying Lemma~\ref{lem:chief_refinement:tdlc} to $L := G_{j+1}$ and $H$ such that $H/G_j = (G_{j+1}/G_j)^\circ$. This yields the desired refined series claimed in $(1)$.

For $(2)$, suppose $G^\circ \in \{G_0,\dots,G_{m-1}\}$; say that $G_k=G^{\circ}$ for some $0\leq k\leq m-1$. For each $i<k$, Lemma~\ref{lem:sgrp_con} yields a closed normal $H_{i+1}$ of $G$ with $G_i\leq H_{i+1}\leq G_{i+1}$ such that $G_{i+1}/H_{i+1}$ is discrete and $H_{i+1}/G_i$ is connected-by-compact. We may thus apply Lemma~\ref{lem:chief_refinement:connected} to each of the pairs $G_{i}\leq H_{i+1}$ with $i< k$ to produce a refined series.  (If $k=0$, there is nothing to do at this stage.)

In the refined series, there are at most $2(\dim_{\Rb}^{\infty}(G_{i+1})-\dim_{\Rb}^{\infty}(G_{i}))+2$ terms $T$ such that $G_{i} \le T < G_{i+1}$. Additionally, at most $\dim_{\Rb}^{\infty}(G_{i+1})-\dim_{\Rb}^{\infty}(G_{i})$ factors are neither compact nor discrete. The number of terms in our refined series strictly below $G_k$ is thus at most
\[
\sum_{i=0}^{k-1} \left( 2(\dim_{\Rb}^{\infty}(G_{i+1})-\dim_{\Rb}^{\infty}(G_{i}))+2 \right)=2k+2\dim_{\Rb}^{\infty}(G),
\]
and the total number of factors that are neither compact nor discrete is at most
\[
\sum_{i=0}^{k-1}(\dim_{\Rb}^{\infty}(G_{i+1})-\dim_{\Rb}^{\infty}(G_{i}))=\dim_{\Rb}^{\infty}(G).
\]

We now consider the series $G_k\leq \dots G_{m-1}\leq G_m= G$. If $G=G_k$, we are done; we thus suppose that $G_k<G$. Let $\Gamma$ be a Cayley--Abels graph for $G$ with $\deg(\Gamma)=\deg(G)$ and put $k_j := \deg(\Gamma/G_j)$.  For each $j \in \{k,\dots,m-1\}$, we apply Lemma~\ref{lem:chief_refinement:tdlc} to each pair $G_j\leq G_{j+1}$ to obtain a refined series.  This results in a series in which the number of terms $T$ such that $G_j \le T < G_{j+1}$ is at most $3(k_j - k_{j+1}) +2$, and at most $k_j - k_{j+1}$ of the factors are neither compact nor discrete. The total number of terms in the refined series not including $G_{m}$ is thus at most
\[
\begin{array}{ccl}
\sum_{j=k}^{m-1}(3(k_j-k_{j+1}) + 2) &  = & 2(m-k) +3(\deg(\Gamma)-\deg(\Gamma/G))\\
									& \leq&  2(m-k) +3\deg(G),
\end{array}
\]
and the total number of non-compact, non-discrete factors is at most
\[
\sum_{j=1}^{m-1}(k_j-k_{j+1}) \leq \deg(G). 
\]

Putting together our two refined series, we obtain an essentially chief series
\[
\triv=K_0\leq K_1\leq\dots\leq K_l=G
\]
such that 
\[
\begin{array}{ccl}
l & \leq & 2k+2\dim_{\Rb}^{\infty}(G)+ 2(m-k) +3\deg(G)\\
  & \leq & 2m+2\dim_{\Rb}^{\infty}(G)+3\deg(G).
  \end{array}
\]
Furthermore, the number of factors that are neither compact nor discrete is at most $\dim_{\Rb}^{\infty}(G)+\deg(G)$.
\end{proof}

\begin{rmk} For any $n\geq 0$, the direct product of $n$ compactly generated non-discrete simple locally compact groups gives a compactly generated \lcsc group such that any essentially chief series has $n$ chief factors. 

One can also construct compactly generated \lcsc groups such that any essentially chief series has at least $n$ compact factors or $n$ discrete factors. This can be arranged by taking wreath products. For example, let $K$ be a non-abelian finite simple group and $V$ be an infinite finitely generated simple group. The group $V$ acts on $K^V$, so we may form $G:=K^V\rtimes V$, which is locally compact. Fixing $U$ a proper open subgroup of $K^V$, $G$ acts faithfully and transitively on the cosets $X:=G/U$. We obtain a second locally compact group $G':=\bigoplus_X V\rtimes G$. One checks that any chief series for $G$ has one compact factor and two discrete factors. By taking more wreath products, one produces any desired number of discrete or compact factors.
\end{rmk}

\subsection{Uniqueness of essentially chief series}

We finally obtain a Jordan--H\"{o}lder theorem for essentially chief series, using the general properties of associated chief factors obtained in \cite{RW_P_15}.  For this uniqueness result, we need to exclude chief factors associated to compact and discrete factors. 

\begin{defn} 
For $G$ a Polish group and $K/L$ a chief factor of $G$, we say that $K/L$ is \textbf{negligible} if $K/L$ is either abelian or associated to a compact or discrete chief factor. A chief block $\mf{a}\in \mf{B}_G$ is \textbf{negligible} if $\mf{a}$ has a compact or discrete representative. The collection of non-negligible chief blocks of $G$ is denoted by $\mf{B}_G^{*}$. See Subsection~\ref{sec:chief_factor} for the definitions of the association relation and chief blocks.
\end{defn}

Using the general methods outlined in \cite[Appendix II]{CM11}, one can produce negligible chief factors which are neither abelian, compact, nor discrete.
  
\begin{rmk} In \cite{RW_LC_16}, we show that all negligible chief factors $K/L$ in an \lcsc group are either quasi-discrete, meaning that the elements of $K/L$ with open centralizer form a dense subgroup of $K/L$, or compact. The work \cite{CM11} furthermore shows quasi-discrete groups have restrictive topological structure.
\end{rmk} 

In contrast to the results about existence of chief series, we do not need to assume that $G$ is compactly generated for our Jordan--H\"{o}lder theorem.

\begin{thm}\label{thm:association_map}
Suppose that $G$ is an \lcsc group and that $G$ has two essentially chief series $(A_i)_{i=0}^m$ and $(B_j)_{j=0}^n$. Define
\begin{align*}
I &:= \{ i \in \{1,\dots,m\} \mid A_i/A_{i-1} \text{ is a non-negligible chief factor of $G$}\}; and \\
J &:= \{ j \in \{1,\dots,n\} \mid B_j/B_{j-1} \text{ is a non-negligible chief factor of $G$}\}.
\end{align*}
Then there is a bijection $f:I \rightarrow J$, where $f(i)$ is the unique element $j \in J$ such that $A_i/A_{i-1}$ is associated to $B_j/B_{j-1}$. 
\end{thm}

\begin{proof}Theorem~\ref{thmintro:Schreier_refinement} provides a function $f: I \rightarrow \{1,\dots,n\}$ where $f(i)$ is the unique element of $\{1,\dots,n\}$ such that $A_i/A_{i-1}$ is associated to a subquotient of $B_{f(i)}/B_{f(i)-1}$.

If $B_{f(i)}/B_{f(i)-1}$ is compact or discrete, then $A_i/A_{i-1}$ is associated to a compact or discrete factor of $G$, which contradicts our choice of $I$.  The factor $B_{f(i)}/B_{f(i)-1}$ is thus chief, and $A_i/A_{i-1}$ is associated to $B_{f(i)}/B_{f(i)-1}$. Theorem~\ref{thmintro:Schreier_refinement} implies $B_{f(i)}/B_{f(i)-1}$ is also non-abelian. Since association is an equivalence relation for non-abelian chief factors, we conclude that $B_{f(i)}/B_{f(i)-1}$ is non-negligible, and therefore, $f(i) \in J$. 

We thus have a well-defined function $f:I \rightarrow J$.  The same argument with the roles of the series reversed produces a function $f':J \rightarrow I$ such that $B_j/B_{j-1}$ is associated to $A_{f'(j)}/A_{f'(j)-1}$.  Since each factor of the first series is associated to at most one factor of the second by Theorem~\ref{thmintro:Schreier_refinement}, we conclude that $f'$ is the inverse of $f$, hence $f$ is a bijection.
\end{proof}

\begin{cor}\label{cor:nsdim}
If $G$ is a compactly generated \lcsc group, then each $\mf{a}\in \mf{B}_G^*$ is represented exactly once in every essentially chief series for $G$, and $|\mf{B}_G^*|\leq \dim^\infty_{\Rb}(G)+\deg(G)$. \end{cor}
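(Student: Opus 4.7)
The plan is to deduce Corollary~\ref{cor:nsdim} directly from the existence result Theorem~\ref{thm:chief_series}, the uniqueness result Theorem~\ref{thm:association_map}, and the block refinement Theorem~\ref{thmintro:Schreier_refinement}. First I would show that each $\mf{a} \in \mf{B}_G^*$ appears at most once among the factors of any essentially chief series $(A_i)_{i=0}^m$: if $A_i/A_{i-1}$ and $A_j/A_{j-1}$ were both non-abelian chief factors associated to a representative $K/L$ of $\mf{a}$, then taking $B := A_{i-1}$, $A := A_i$ (respectively $A_{j-1}, A_j$) in the definition of covering shows that each of these factors covers the block $[K/L]$, which contradicts the uniqueness clause of Theorem~\ref{thmintro:Schreier_refinement}.

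Next I would show that each $\mf{a} \in \mf{B}_G^*$ appears at least once in every essentially chief series for $G$. Fix a representative $K/L$ of $\mf{a}$ and apply Theorem~\ref{thm:chief_series} to the sequence $(L, K)$; this yields an essentially chief series containing both $L$ and $K$. Because $K/L$ is already a chief factor of $G$, no refinement can insert a closed normal subgroup of $G$ strictly between $L$ and $K$, so $K/L$ itself occurs as one of the displayed factors of this series. Applying Theorem~\ref{thm:association_map} to this series and to any given essentially chief series for $G$ then produces a bijection on non-negligible chief factors which pairs $K/L$ with a non-negligible chief factor of the given series associated to $K/L$, giving the required representative of $\mf{a}$.

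For the cardinality bound, the key observation is that a non-negligible chief factor is neither compact nor discrete: every normal factor is trivially associated to itself (take $K_1 = K_2$ and $L_1 = L_2$ in the definition of association), so a compact or discrete chief factor is associated to a compact or discrete factor and is therefore negligible. Applying Theorem~\ref{thm:chief_series}(2) to the one-term sequence $(G^\circ)$ yields an essentially chief series in which at most $\dim^\infty_{\Rb}(G) + \deg(G)$ factors are neither compact nor discrete. Since each block in $\mf{B}_G^*$ occurs exactly once among the factors of this series, and each such occurrence is neither compact nor discrete, the bound $|\mf{B}_G^*| \leq \dim^\infty_{\Rb}(G) + \deg(G)$ follows immediately. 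I do not foresee any serious obstacle in this argument; the most delicate small point is the remark that a non-abelian chief factor automatically covers its own block, but this is immediate from the definition of covering.
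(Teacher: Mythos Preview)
Your proposal is correct and follows essentially the same approach as the paper's proof. The only minor organizational difference is that you argue ``at most once'' directly from the uniqueness clause of Theorem~\ref{thmintro:Schreier_refinement}, whereas the paper obtains both existence and uniqueness simultaneously from the bijection of Theorem~\ref{thm:association_map}; these are equivalent since Theorem~\ref{thm:association_map} is itself derived from Theorem~\ref{thmintro:Schreier_refinement}.
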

\begin{proof}
Let $(G_i)_{i=0}^n$ be an essentially chief series for $G$. For $\mf{a}\in \mf{B}_G^*$, fix a representative $A/B\in \mf{a}$ and use Theorem~\ref{thm:chief_series} to refine the series $\triv\leq A<B\leq G$ to a chief series $(H_i)_{i=0}^k$. Theorem~\ref{thm:association_map} now implies there is a unique $0\leq i<n$ such that $A/B$ is associated to $G_{i+1}/G_i$. Hence, $G_{i+1}/G_i\in \mf{a}$. On the other hand, association is an equivalence relation, so the uniqueness of $i$ implies that $G_{i+1}/G_i$ is the only representative of $\mf{a}$ appearing in $(G_i)_{i=0}^n$. The chief block $\mf{a}\in \mf{B}_G^*$ is thus represented exactly once in any essentially chief series for $G$.

For the second claim, we use Theorem~\ref{thm:chief_series} to produce $(K_i)_{i=0}^m$ an essentially chief series for $G$ that refines the series $\triv\leq G^{\circ}\leq G$. By the previous paragraph, each $\mf{a}\in \mf{B}_G^{*}$ admits exactly one representative with the form $K_{i+1}/K_i$ for some $0\leq i<m$. Moreover, such a representative must be neither compact nor discrete. Theorem~\ref{thm:chief_series} ensures that we can choose $(K_i)_{i=0}^m$ such that the number of non-compact, non-discrete factors is at most $\dim^\infty_{\Rb}(G)+\deg(G)$, hence $|\mf{B}_G^*|\leq \dim^\infty_{\Rb}(G)+\deg(G)$.
\end{proof}

To conclude this section, we note that each non-negligible block of a compactly generated group admits a unique smallest closed normal subgroup covering it; this somewhat technical observation is important in \cite{RW_LC_16}. See Subsection~\ref{sec:chief_factor} for the definition of a normal subgroup covering a chief block.

\begin{prop}\label{prop:non-neg:min_covered}
Let $G$ be a compactly generated \lcsc group with $\mf{a} \in \mf{B}_G^*$.  Then there is a closed normal subgroup $G_{\mf{a}}$ of $G$ such that for every closed normal subgroup $K$ of $G$, $K$ covers $\mf{a}$ if and only if $K \ge G_{\mf{a}}$.
\end{prop}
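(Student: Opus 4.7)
The plan is to set $\mc{N} := \{N \in \Norm(G) \mid N \text{ covers } \mf{a}\}$ and show that $\mc{N}$ is filtering and closed under filtering intersections; then $G_{\mf{a}} := \bigcap \mc{N}$ is the required minimum, and the ``if'' direction of the equivalence is automatic, because any $K \ge G_{\mf{a}}$ contains the chief factor of $G$ inside $G_{\mf{a}}$ that witnesses coverage of $\mf{a}$.

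First I would show $\mc{N}$ is closed under intersections of filtering subfamilies. Given a filtering subfamily $\mc{F} \subseteq \mc{N}$, Theorem~\ref{thm:essential_finiteness}$(1)$ produces $N \in \mc{F}$ and a closed normal subgroup $K$ of $G$ with $\bigcap \mc{F} \le K \le N$, $K/\bigcap \mc{F}$ compact, and $N/K$ discrete. Apply Theorem~\ref{thmintro:Schreier_refinement} to the series $\triv \le \bigcap \mc{F} \le K \le N \le G$: the unique factor covering $\mf{a}$ cannot be $G/N$ (as $N$ covers), cannot be $K/\bigcap \mc{F}$ (otherwise a chief factor of $G$ associated to $\mf{a}$ would be a subquotient of a compact group, giving $\mf{a}$ a compact representative and contradicting $\mf{a} \in \mf{B}_G^*$), and cannot be $N/K$ (analogously, via discreteness). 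Hence $\bigcap \mc{F}$ covers $\mf{a}$, so $\bigcap \mc{F} \in \mc{N}$.

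Next I would show $\mc{N}$ is closed under pairwise intersection. Suppose $N_1, N_2 \in \mc{N}$ but $N_1 \cap N_2 \notin \mc{N}$. Applying Theorem~\ref{thmintro:Schreier_refinement} to the series $\triv \le N_1 \cap N_2 \le N_i \le G$ for $i = 1, 2$ forces both $N_1/(N_1 \cap N_2)$ and $N_2/(N_1 \cap N_2)$ to cover $\mf{a}$. Pick a chief factor $A/B$ of $G$ associated to $\mf{a}$ with $N_1 \cap N_2 \le B < A \le N_1$; in particular $A \cap N_2 \le N_1 \cap N_2 \le B$. A direct check of the four defining conditions of the association relation then shows that $\overline{AN_2}/\overline{BN_2}$ is a normal factor of $G$ associated to $A/B$, which yields a chief factor of $G$ inside $\overline{N_1 N_2}/N_2$ associated to $\mf{a}$. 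But then in the series $\triv \le N_2 \le \overline{N_1 N_2} \le G$ both $N_2$ and $\overline{N_1 N_2}/N_2$ cover $\mf{a}$, contradicting the uniqueness in Theorem~\ref{thmintro:Schreier_refinement}. Combining with the previous step, $\mc{N}$ is a filtering family containing its own intersection, so $G_{\mf{a}} := \bigcap \mc{N}$ is the desired minimum.

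The main obstacle is the topological verification in the finite-intersection step: because $AN_2$ need not be closed in a Polish group, one must pass to the closures $\overline{AN_2}, \overline{BN_2}$ and check the association relation survives. The crux is establishing $A \cap \overline{BN_2} = B$, where chief-simplicity of $A/B$ forces the dichotomy $A \cap \overline{BN_2} \in \{B, A\}$ and reduces the problem to excluding the degenerate case $A \le \overline{BN_2}$; this is where the associated-factor machinery of \cite{RW_P_15} becomes indispensable.
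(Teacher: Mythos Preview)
Your approach is essentially the same as the paper's, just with the steps reordered and one step expanded. The paper defines $G_{\mf{a}} := \bigcap \mc{K}$ where $\mc{K}$ is the set of closed normal subgroups covering $\mf{a}$, then immediately invokes \cite[Lemma~7.10]{RW_P_15} to conclude that $\mc{K}$ is filtering; this is precisely the pairwise-intersection closure you attempt to prove by hand in your second step, and your concluding remark that ``the associated-factor machinery of \cite{RW_P_15} becomes indispensable'' is exactly right --- the paper simply cites that machinery rather than reproducing it. Your first step (applying Theorem~\ref{thm:essential_finiteness} and Theorem~\ref{thmintro:Schreier_refinement} to the series $\triv \le \bigcap \mc{F} \le K \le N \le G$, then ruling out the compact and discrete factors via non-negligibility) is word-for-word the paper's argument, applied once to $\mc{F} = \mc{K}$.

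One small gap worth flagging in your sketch of the pairwise step: even once you establish that $\overline{AN_2}/\overline{BN_2}$ is a normal factor associated to $A/B$, you still need it to be (or to contain) a \emph{chief} factor in order to witness that $\overline{N_1N_2}/N_2$ covers $\mf{a}$. Association alone does not transfer chief-ness to an arbitrary normal factor, so the passage ``which yields a chief factor of $G$ inside $\overline{N_1N_2}/N_2$'' hides a further appeal to \cite{RW_P_15}. This is not a flaw in strategy, just confirmation that citing Lemma~7.10 directly, as the paper does, is the cleaner route.
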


\begin{proof}Let $\mc{K}$ be the set of closed normal subgroups of $G$ that cover $\mf{a}$ and set $G_{\mf{a}}:=\bigcap \mc{K}$.  By \cite[Lemma~7.10]{RW_P_15}, the set $\mc{K}$ is a filtering family, and thus, Theorem~\ref{thm:essential_finiteness} ensures there exists $L \in \mc{K}$ and $G_{\mf{a}} \le M \le L$ such that $M$ is $G$-invariant and open in $L$ and $M/G_{\mf{a}}$ is compact.  

We now consider the series
\[
\triv \le G_{\mf{a}} \le M \le L \le G.
\]
Theorem~\ref{thmintro:Schreier_refinement} implies that one of $L/M$, $M/G_{\mf{a}}$, or $G_{\mf{a}}/\triv$ covers $\mf{a}$.  As $\mf{a}$ is non-negligible, neither the discrete factor $L/M$ nor the compact factor $M/G_{\mf{a}}$ covers $\mf{a}$.  We deduce that $G_{\mf{a}}$ covers $\mf{a}$, hence every closed normal subgroup of $G$ that contains $G_{\mf{a}}$ covers $\mf{a}$.  Since every closed normal subgroup of $G$ that covers $\mf{a}$ contains $G_{\mf{a}}$ by construction, the proposition is verified.
\end{proof}


\bibliographystyle{amsplain}
\bibliography{biblio}{}

\end{document}